\pgfplotsset{compat=1.15}
\definecolor{ffudud}{rgb}{1.,0.30196078431372547,0.30196078431372547}
\definecolor{ududff}{rgb}{0.30196078431372547,0.30196078431372547,1.}
\definecolor{ttttff}{rgb}{0.2,0.2,1.}
\newtheorem{thm}{Theorem}[section]
\newtheorem{prop}[thm]{Proposition}
\newtheorem{lemma}[thm]{Lemma}
\newtheorem{cor}[thm]{Corollary}
\theoremstyle{definition}
\newtheorem{defi}[thm]{Definition}
\newtheorem{rem}[thm]{Remark}
\newtheorem*{primer*}{Example}
\newenvironment{ex}[1][]{
  \begin{primer*}[#1]\pushQED{\qed}
}{
  \popQED\end{primer*}
}
\tikzstyle{dot}=[fill={rgb,255: red,191; green,191; blue,191}, draw=black, shape=circle]
\tikzstyle{arrow}=[->, ultra thick, line width=2pt]
\tikzstyle{no_arrow}=[-, line width=2pt]
\tikzstyle{new edge style 0}=[-]
\tikzstyle{dotted}=[-, dashed, line width=2pt]
\DeclareMathOperator{\Mat}{\mathit{Mat}}
\DeclareMathOperator{\im}{im}
\DeclareMathOperator{\UT}{UT}
\DeclareMathOperator{\rad}{rad}
\DeclareMathOperator{\soc}{soc}
\DeclareMathOperator{\urep}{\underline{\smash{\rep}}}
\DeclareMathOperator{\rep}{rep}
\newcommand\mbb[1]{\mathbb{#1}}
\DeclareMathOperator{\rank}{rank}
\DeclareMathOperator{\GL}{GL}
\DeclareMathOperator{\Mod}{\underline {mod}}
\DeclareMathOperator{\raddim}{\underline{rdim}}
\DeclareMathOperator{\socdim}{\underline{sdim}}
\DeclareMathOperator{\layer}{\mbb{S}}
\DeclareMathOperator{\rl}{rl}
\DeclareMathOperator{\socl}{sl}
\newcommand{\Addresses}{{
  \bigskip
  \footnotesize
  M.~Čmrlec, \textsc{
  (1) KU Leuven, Celestijnenlaan 200B, B-3001 Leuven, Belgium; (2) BCAM,
Basque Center for Applied Mathematics, Mazarredo 14, 48009 Bilbao, Basque Country,
Spain.}
  \par\nopagebreak
  \textit{E-mail address}: \texttt{marko.cmrlec@kuleuven.be}
}}
\def\keywords{\xdef\@thefnmark{}\@footnotetext}
\title{Irreducible components of varieties of representations of a class of finite dimensional algebras}
\author{Marko Čmrlec}
\begin{document}

\maketitle

\begin{abstract}
\noindent Let $\mathcal{A}=k\langle x_1,\dots,x_n\rangle/((x_1,\dots,x_n)^3+(S)$, where $S=\sum_{i,j=1}^n a_{ij}x_ix_j$ with $\det(a_{ij})\neq 0$ be an algebra. We determine the irreducible components of the variety of representations $\rep_d(\mathcal{A})$. We do this by showing that the subvarieties with fixed radical or socle layering are irreducible. We then compare the coverings we get using radical and socle layerings to determine the components.
\end{abstract}

\keywords{2020 \emph{Mathematics Subject Classification.} 16G10; 16G20,  14M12}%
\keywords{generic representation theory, irreducible components of varieties, vector bundles}
{
\setlength{\parskip}{0pt}
\tableofcontents
}
\section{Introduction}

The main goal of representation theory is to understand all modules of a given algebra and their homomorphisms. This problem is equivalent to classifying the indecomposable modules and determining their module-theoretic properties. However, for most algebras, this seems to be a hopeless task. Because of that, we would like to simplify the problem and focus on understanding just the generic modules. There exist natural varieties that parametrise representations of a fixed algebra. These varieties further admit an action of a reductive group, such that the orbits are isomorphism classes of representations. This problem has first been studied in~\cite{Kac1980, KAC1982141} 
for hereditary algebras. In this case, the varieties are irreducible so it makes sense to speak of generic points and generic properties of such varieties. In general, however, the varieties have many irreducible components. So the task in this case is to determine the irreducible components and generic properties of representations in each component. For a thorough survey of this problem check~\cite{genRepTheory}.

The problem has most successfully been studied in the case of hereditary algebras, where the dimensions of generic indecomposable summands are understood~\cite{Schofield1992GeneralRO}.

The other successful case is for tame non-hereditary algebras. The problem has been resolved for several classes of such algebras using the known classifications of indecomposable representations. For example, this has been done for Gelfand-Ponomarev algebras $k[x,y]/(x^r,y^s,xy)$ by~\cite{schroer_tame}, acyclic gentle string algebras by~\cite{caroll_tame}, and preprojective algebras $P(Q)$ where $Q$ is quiver of Dynkin type by~\cite{ChristofGeiss2003}.

For wild non-hereditary algebras, the problem is more difficult. In this case, it has only been solved for truncated path algebras~\cite{ ClosuresVarietiesReps, IrrCompLocal}. These proofs use the fact, that stratification by powers of radicals in this case induces the covering of the variety of representations by affine spaces. We extend this method by adding a relation. This means that the covering varieties are not affine spaces. However, for appropriate relations, they remain irreducible. 

A related problem is classification of algebras with irreducible representation varieties. This has been done for quotients of path algebras of quivers with at most two vertices by~\cite{BOBINSKI2019624, BOBINSKI2024107707}.

In the following, we determine the components for varieties of representations of a class of finite dimensional algebras, that contains all algebras of the form $\mathcal{A}=k\langle x_1,\dots,x_n\rangle/((x_1,\dots,x_n)^3+(\sum_i x_i^2))$. For these, we show that the irreducible components are given by the closures of varieties $\rep_{\underline{d}} (\mathcal{A})$ of representations with given radical layering.

\begin{thm}\label{thm_components}
    Let $k$ be an algebraically closed field with $\operatorname{char}(k)\neq 2$, and $\mathcal{A}=k\langle x_1,\dots,x_n\rangle/I$ the algebra of the algebra of noncommutative polynomials, quotiented by the ideal

    \[I=(x_1,\dots,x_n)^3+(S)\]

    where $S=\sum_{i,j=1}^n a_{ij}x_ix_j$ with $\det(a_{ij})\neq 0$. Let $d\in\mbb{N}$.

    Then $\rep_d \mathcal{A}$ admits a decomposition into irreducible components

    \[\rep_d \mathcal{A}=\bigcup_{\underline{d}\in \Lambda}\overline{\rep_{\underline{d}}(\mathcal{A})}\]

    Where $\Lambda$ is the set of dimension vectors $\underline{d}=(d_0,d_1,d_2)$, which satisfy $d_0+d_1+d_2=d$ and

    \begin{align*}
        d_1&\leq n\cdot d_0 & d_1&\leq n\cdot d_2  \\
        d_2&\leq n\cdot d_1-d_0 &  d_0&\leq n\cdot d_1-d_2, 
    \end{align*}

    except in the case $d\equiv 1\pmod{n^2+n}$, when $\Lambda$ additionally contains the dimension vectors

    \[(a,n(a-1),(n^2-1)(a-1)) \qquad \text{and} \qquad ((n^2-1)(a-1),n(a-1)+1,a-1)\]

    for $a\in\mbb{N}$ such that $d=(n^2+n)(a-1)+1$.
\end{thm}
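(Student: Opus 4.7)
The plan is to combine the main structural results of the earlier sections---that the strata $\rep_{\underline{d}}(\mathcal{A})$ indexed by radical layering are irreducible and locally closed in $\rep_d \mathcal{A}$, and the analogous statement for socle layerings---with a careful comparison of the two stratifications. Since finitely many such irreducible strata cover $\rep_d \mathcal{A}$, its irreducible components are precisely those closures $\overline{\rep_{\underline{d}}(\mathcal{A})}$ that are maximal under inclusion, and the same description holds for socle strata. Matching up the maximal closures from the two pictures is what will single out the indexing set $\Lambda$.

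The first step is to interpret the inequalities defining $\Lambda$. The bounds $d_1 \le n d_0$ and $d_1 \le n d_2$ are immediate from the surjection $(M/\rad M)^n \twoheadrightarrow \rad M/\rad^2 M$ induced by the action of $x_1,\dots,x_n$ and its socle-side dual. The sharper bound $d_2 \le n d_1 - d_0$ uses the quadratic relation $S$: the $k$-linear map
\[ \phi \colon M/\rad M \longrightarrow (\rad M/\rad^2 M)^n, \qquad v \longmapsto \Bigl(\textstyle\sum_j a_{ij}\, x_j v\Bigr)_{i=1}^n \]
has image lying inside $\ker\bigl((\rad M/\rad^2 M)^n \twoheadrightarrow \rad^2 M\bigr)$, and $\phi$ being injective with image equal to the whole kernel is equivalent to $d_2 = n d_1 - d_0$; injectivity in turn fails precisely when $M$ has a simple direct summand. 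The dual inequality $d_0 \le n d_1 - d_2$ is the analogous socle condition. Thus $\Lambda$ consists exactly of the radical layerings realised by modules without a simple summand (or dual anomaly).

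For each $\underline{d} \in \Lambda$, I would show that the generic element of $\rep_{\underline{d}}(\mathcal{A})$ has socle layering $(d_2, d_1, d_0)$, exploiting the $d_0 \leftrightarrow d_2$ symmetry of the four inequalities. This yields $\overline{\rep_{\underline{d}}(\mathcal{A})} = \overline{\rep^{(d_2, d_1, d_0)}(\mathcal{A})}$ as the closure of a socle stratum too; since both the radical and the socle stratification must produce the same list of maximal closures, a dimension-counting comparison then forces each such closure to be an irreducible component, establishing the non-exceptional part of the statement.

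The hard part is the case $d = 1 + (a-1)(n^2+n)$. Here the module $(a-1)\mathcal{A} \oplus k$ has radical layering $(a, n(a-1), (n^2-1)(a-1))$, which fails $d_2 \le n d_1 - d_0$ by exactly one: a direct calculation gives $d_0 + d_2 - n d_1 = 1$, reflecting the one-dimensional kernel of $\phi$ contributed by the simple summand. The stratum for this layering is the orbit closure of $(a-1)\mathcal{A} \oplus k$, and the crux is to show this irreducible subvariety is not contained in any $\overline{\rep_{\underline{d}'}(\mathcal{A})}$ with $\underline{d}' \in \Lambda$---via a dimension count combined with the rigidity of the decomposition $(a-1)\mathcal{A} \oplus k$ under degeneration. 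The second exceptional dimension vector arises from the analogous socle-side construction. The main obstacle is then the combinatorial verification that \emph{every other} radical layering outside $\Lambda$ is absorbed into the closure of some stratum in $\Lambda$, and the identification of $d \equiv 1 \pmod{n^2+n}$ as the precise arithmetic condition under which the two listed exceptional layerings fail to be so absorbed.
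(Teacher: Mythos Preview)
Your high-level plan---cover $\rep_d\mathcal{A}$ by irreducible radical-layer strata, do the same for socle-layer strata, and then match the two lists of maximal closures---is exactly the paper's strategy. But several of the key steps you sketch are either incorrect or do not correspond to arguments that will actually go through.

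First, your interpretation of the inequality $d_0\le nd_1-d_2$ is not right. The kernel of your map $\phi$ is $\{v\in M/\rad M : x_jv\in\rad^2M \text{ for all }j\}$; this is the paper's invariant $h_0$, and it does \emph{not} detect simple direct summands. A module can have $h_0>0$ without splitting off a copy of $k$. The correct statement is that $h_0=0$ is an open condition on the stratum, and the paper computes its \emph{generic} value as $\max(d_0-(nd_1-d_2),0)$ by analysing the linear system $\sum A_iC_i=0$ directly (Proposition~\ref{value_k0}). Your plan never produces this computation, and without it you cannot decide for which $\underline d$ the generic socle layering is $(d_2,d_1,d_0)$.

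Second, your treatment of the exceptional case is built on a false premise: the stratum with radical layering $(a,n(a-1),(n^2-1)(a-1))$ is \emph{not} the orbit closure of $(a-1)\mathcal{A}\oplus k$. It is an irreducible variety containing many non-isomorphic modules. Consequently the ``rigidity under degeneration'' idea has nothing to grab onto. The paper instead argues as follows: whenever $h_0\neq0$ generically, every point of $\rep(\underline d,\mathcal{A})$ already lies in $\ker\Phi_{\underline d'}$ for $\underline d'=(d_0-1,d_1+1,d_2)$, so $\overline{\rep_{\underline d}}\subseteq\overline{\rep_{\underline d'}}$ \emph{provided} $\rep(\underline d',\mathcal{A})\neq\varnothing$. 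The exceptional vectors are precisely those with $h_0\neq0$ for which the adjacent $\underline d'$ violates the non-emptiness criterion of Theorem~\ref{rep_layer_exists_iff}; a short arithmetic check then forces $\underline d=(a,n(a-1),(n^2-1)(a-1))$.

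Third, even for the non-exceptional $\underline d\in\Lambda$, ``dimension-counting comparison'' is not how maximality is established. What the paper uses is the upper semicontinuous map $\vartheta=(\raddim,\socdim)$: once you know the generic socle layering on $\rep_{\underline d}$ is $(d_2,d_1,d_0)$, Lemma~\ref{socdim_raddim_complement} says the pair $(\underline d,(d_2,d_1,d_0))$ is \emph{minimal} in $\vartheta(\rep_d\mathcal{A})$, and distinct minimal values separate the closures. For the exceptional $\underline d$ the same minimality argument works, after an explicit construction showing the generic socle layering there is $((n^2-1)(a-1),n(a-1)+1,a-1)$.

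Finally, the ``combinatorial verification'' you flag as the main obstacle---that every non-$\Lambda$ layering is absorbed---is handled in the paper not by combinatorics on $\Lambda$ but by iterating the containment $\overline{\rep_{\underline d}}\subseteq\overline{\rep_{\underline d'}}$ above together with Lemma~\ref{lemma_min_soc_rad}: if $\raddim(\socdim(\underline d))\neq\underline d$ then $\overline{\rep_{\underline d}}$ is strictly contained in some other stratum closure. Computing $\socdim(\underline d)$ explicitly (Corollary~\ref{generic_socdim}) and checking the fixed points of $\raddim\circ\socdim$ is what finishes the proof.
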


\begin{rem}
    Due to the duality between radicals and socles, the above theorem also holds if we replace $\rep_{\underline{d}}( \mathcal{A})$ with varieties $\rep^*_{\underline{d}}( \mathcal{A})$ of matrices with fixed socle layering.
\end{rem}

Additionally, we determine the generic dimensions of socle layering for these varieties.

\begin{thm}\label{thm_gen_rad}
    Let algebra $\mathcal{A}$ be as above and $\underline{d}=(d_0,d_1,d_2)$ be a dimension vector for radical layering, such that $\rep_{\underline{d}}(\mathcal{A})$ is an irreducible component of $\rep_d(\mathcal{A})$. Then the generic dimension of socle layering for $\rep(\underline{d},\mathcal{A})$ equals;
    
    \begin{align*}
        \socdim \rep (a,n(a-1),(n^2-1)(a-1))&= ((n^2-1)(a-1),n(a-1)+1,a-1),\\
        \socdim \rep ((n^2-1)(a-1),n(a-1)+1,a-1)&= (a,n(a-1),(n^2-1)(a-1)),\\
        \socdim \rep (d_0,d_1,d_2)&=(d_2,d_1,d_0) & \text{otherwise.}
    \end{align*}
\end{thm}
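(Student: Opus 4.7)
My approach will exploit the bijection between the radical-layering and socle-layering decompositions of $\rep_d(\mathcal{A})$ into irreducible components, which comes from Theorem~\ref{thm_components} and its subsequent remark. Since $\rep_{\underline{d}}(\mathcal{A})$ is irreducible for each $\underline{d} \in \Lambda$, the generic socle layering $\underline{d}^*$ is a well-defined element of the analogous socle-indexed set, and $\overline{\rep_{\underline{d}}(\mathcal{A})} = \overline{\rep^*_{\underline{d}^*}(\mathcal{A})}$. The aim is to identify $\underline{d}^*$ by analyzing the structure maps of a generic module.

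Given a vector-space splitting $M = M^0 \oplus M^1 \oplus M^2$ by radical layers, I would focus on
\[
\alpha\colon M^0\to (M^1)^n,\ m\mapsto (x_1m,\ldots,x_nm), \qquad \delta\colon M^1\to (M^2)^n,\ y\mapsto (x_1y,\ldots,x_ny),
\]
together with the non-graded lift $\alpha'\colon M^0 \to M^2$ capturing the ambiguity of the splitting. A direct computation gives $x_i x_j m^0 = \delta_i\alpha_j m^0$ for $m^0 \in M^0$, so the relation $\sum a_{ij} x_i x_j = 0$ forces $\alpha$ to factor through $\ker \beta$, where $\beta\colon (M^1)^n \to M^2$, $\vec y \mapsto \sum a_{ij} x_i y_j$, is surjective by $\det(a_{ij}) \neq 0$. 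Thus $\alpha$ can be injective only when $d_0 \leq nd_1 - d_2$; symmetrically, $\delta$ injectivity requires $d_1 \leq nd_2$.

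For standard components both strict inequalities hold, and I would construct an explicit module achieving joint injectivity; then $\soc M = \rad^2 M$ and $\soc_2 M = \rad M$ generically, yielding socle layering $(d_2, d_1, d_0)$. For each exceptional vector, exactly one inequality fails by $1$: the radical vector $(a,n(a-1),(n^2-1)(a-1))$ violates $d_0 \leq nd_1 - d_2$, so $\dim \ker \alpha = 1$ generically; a generic lift $\alpha'$ sends $\ker \alpha$ out of $\im \delta \subsetneq (M^2)^n$, so the 1-dimensional kernel contributes to $\soc_2 M$ but not to $\soc M$, yielding socle layering $(d_2, d_1+1, d_0-1)$. The second exceptional case is dual: $((n^2-1)(a-1),n(a-1)+1,a-1)$ violates $d_1 \leq nd_2$, so $\dim \ker \delta = 1$ generically, producing socle layering $(d_2+1, d_1-1, d_0)$. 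Both outcomes match the claimed generic socle dimension vectors.

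The main obstacle will be the transversality argument in the exceptional case: showing that a generic $\alpha'$ indeed maps $\ker \alpha$ out of $\im \delta$, so that the extra socle dimension predicted by the purely graded picture is absorbed into $\soc_2 M$ rather than $\soc M$. This reduces to constructing an explicit non-graded module exhibiting the predicted socle structure and then invoking upper semicontinuity of socle dimensions together with the irreducibility of $\rep_{\underline{d}}(\mathcal{A})$ to promote the explicit construction to the generic statement. A secondary technical check is that the strict inequality on the non-failing side (e.g.\ $d_1 < nd_2$ in the $R_1$ case) is what makes $\im \delta$ proper in $(M^2)^n$, so the transversality is available at all.
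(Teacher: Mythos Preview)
Your plan is correct and tracks the paper's argument closely. Your maps $\alpha,\delta$ are exactly the paper's upper semicontinuous functions $h_0,h_1$ in module-theoretic dress; the three-way case split (both injective; $\dim\ker\alpha=1$; $\dim\ker\delta=1$) matches the paper's treatment, and your transversality argument for the first exceptional vector is precisely what the paper does when it chooses the first column of $B$ to be linearly independent of the columns of the block $\bigoplus P$.

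One caveat on the phrase ``dual'' for the second exceptional vector $R_2=((n^2{-}1)(a{-}1),\,n(a{-}1){+}1,\,a{-}1)$: the obstruction there to $\soc^2 M=\rad M$ is whether $\im\alpha$ meets $(\ker\delta)^n$ nontrivially inside $\ker\beta$, and both of these subspaces depend on the constrained data $(A,C)$ rather than on the free block $B$, so there is no direct analogue of ``perturb $\alpha'$ generically''. The paper handles $R_2$ instead by its general formula for $\socdim$ when $h_1>0$ (Corollary~\ref{generic_socdim}), whose proof fixes $A$ in a specific normal form and computes. Your opening observation actually gives a cleaner alternative: since matching the radical-indexed and socle-indexed component lists from Theorem~\ref{thm_components} and its remark yields a bijection $\Lambda\to\Lambda$, and since you have already shown this bijection sends every standard vector to its reverse and sends $R_1$ to $R_2$, bijectivity forces $\socdim(R_2)=R_1$ with no further computation.
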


In section~\ref{defs} we give the exact definitions of varieties $\rep_d \mathcal{A}$, $\rep({\underline{d}}, \mathcal{A})$, $\rep_{\underline{d}}(\mathcal{A})$, for arbitrary path algebra of a quiver as well as some semicontinuous maps we will use. 

In section~\ref{irr} we show that the stratification of representations with their radicals and socles induces a structure of a kernel of a morphism of vector bundles on varieties $\rep(\underline{d},\mathcal{A})$ and compute their dimensions of fibres. This gives us a condition for when these varieties are vector bundles themself. We use this to show that for the considered algebras, the varieties $\rep(\underline{d}, \mathcal{A})$ and $\rep(\underline{d})$ are irreducible. A similar idea has been used before by~\cite{BOBINSKI2019624} for a different stratification.

In section~\ref{possible_layerings} we determine which $\rep(\underline{d},\mathcal{A})$ are nonempty.

In section~\ref{incl} we determine which of the above varieties are maximal for inclusion, and use this to determine which are components. We use the same semicontinuous maps as in~\cite{IrrCompLocal} to separate the components. Their method to show inclusions, however, does not generalise. For this, instead of directly comparing the closures varieties with fixed radicals, we compute their generic socle layerings and use them to determine possible components.

\section{Notation and definitions}~\label{defs}
\subsection*{Representation theory}
In this text, $\mathcal{A}$ is a finite-dimensional unital associative (but not necessarily commutative) algebra over an algebraically closed field $k$ of characteristic  $\operatorname{char} k\neq 2$. A module $M$ is a finitely generated left $\mathcal{A}$-module. We say it has dimension $\dim M=d$ if it has dimension $d$ as a $k$-vector space. 

For an $\mathcal{A}$-module $M$ its Jacobson radical $\rad M$ is the intersection of all maximal left submodules of $M$. It is well known that for modules over Artin algebras, $\rad M=\rad \mathcal{A} \cdot M$. Similarly, we define $i$-th radical $\rad^iM=\rad^i\mathcal{A}\cdot M$. Again because $\mathcal{A}$ is Artin the series of radicals is finite and for some $m$, $\rad^m\mathcal{A}=0$. Hence we get a filtration

\[M\supseteq \rad M \supseteq \dots \supseteq\rad^m M=0\]

If $l\leq m$ is the least integer such that $\rad^l M=0$ we say that $l=\rl M$ is the radical length of $M$. The sequence of factors $(\rad^i M/\rad^{i+1} M)_i$ is called the radical layering of $M$. This is a sequence of semisimple $M$ modules. Hence it is determined by the dimensions of its isotypical components.

Dually, the socle $\soc M$ is the largest semisimple submodule of $M$. For modules over an Artin algebra, there is an alternative characterisation 

\[\soc M=\{m\in M\; | \; (\rad \mathcal{A})\cdot m=0\}\]

so we can define $i$-th socle as 

\[\soc^i M=\{m\in M\; | \; (\rad^i \mathcal{A})\cdot m=0\}\]

Similarly to radical, we get a filtration 

\[0\subseteq \soc M \subseteq \dots \subseteq\soc^m M=M\]

We call the sequence of factors $(\soc^{i+1}M/\soc^i M)_i$ the socle layering of $M$. This is also a sequence of semisimple modules.
We also define socle length $\socl M$ as the minimal $l$, such that $\soc^l M=M$. For every Artin algebra $\mathcal{A}$ and $\mathcal{A}$-module $M$, $\socl M=\rl M$. 

Since socle is dual to radical, the results we prove for radicals have dual versions for socles. From now on we will focus on the results for radicals and note when similar construction also works for socles.

By Morita equivalence, it is enough to consider the path algebras of quivers. A quiver $\Gamma=(\Gamma_0,\Gamma_1,s,e)$ is a directed multigraph that may have loops. The set of vertices of $\Gamma$ is $\Gamma_0$ and the set of its edges is $\Gamma_1$. To each edge $f\in \Gamma_1$ we associate its start $s(f)$ and end $e(f)$. A path is $p$ in $\Gamma$ starting at vertex $a=s(p)$ and ending at vertex $b=e(p)$ is a (possibly length $0$) sequence of edges $f_1,\dots, f_n$ with $s(f_1)=a$, $e(f_i)=s(f_{i+1})$ and $e(f_n)=b$. We write $e_a$ for the length $0$ path at $a\in \Gamma_0$.

Path algebra $k\Gamma$ of $\Gamma$ is the $k$ algebra generated by paths in $\Gamma$, where product $p\cdot q$ is the concatenation $p$ after $q$ if $e(q)=s(p)$ and $0$ otherwise. An element $c=\sum c_ip_i\in k\Gamma$ is homogeneous if there exist $s(c),e(c)$ such that $s(c)=s(p_i)$ and $p(c)=e(p_i)$ for all $i$ with $c_i\neq 0$. We define $k\Gamma^{+n}$ as the (two-sided) ideal of all paths of length at least $n$. A (two sided) ideal $I\subseteq k\Gamma$ is admissible iff there exists some $n\in\mbb{N}$ such that $k\Gamma^{+n}\subseteq I\subseteq k\Gamma^{+2}$.

A representation $(\varphi,V)$ of a path algebra $k\Gamma$ associates to each vertex $a$ in $\Gamma$ a vector space $V(a)$ and each edge $f$ a map $\varphi(f):V(s(f))\rightarrow V(e(f))$. We can extend $\varphi$ by compositions and linearity to homogeneous elements $ k\Gamma$. $(\varphi,V)$ is a representation of the path algebra $k\Gamma/I$ of quiver $\Gamma$ with relations in $I$. if $\varphi(c)=0$ for all homogeneous elements $c\in I$. For a representation $(\varphi,V)$ we define the dimension vector $d=(d(a))_{a\in \Gamma_0}=(\dim V(a))_{a\in \Gamma_0}$.

A morphism $\Phi\in \hom((\varphi,V),(\vartheta,W))$ is a set of linear maps $\Phi(a):V(a)\rightarrow W(a)$ that commute with the actions $\varphi(f)$ and $\vartheta(f)$ for all edges $f\in k\Gamma$. We write $\urep(k\Gamma/I)$ for the category of representations of $k\Gamma/I$ with morphisms as above. There exists an equivalence between $\urep(k\Gamma/I)$ and $k\Gamma/I-\Mod$. So for path algebras with relations, we may study the category $\urep(k\Gamma/I)$ instead of $k\Gamma/I-\Mod$.

\subsection*{Varieties of representations}

Let $\mathcal{A}=k\Gamma/I$ be a path algebra with relations and fix an ordering $f_1,\dots f_n$ of edges in $\Gamma_1$.

Then the set of representations of $\mathcal{A}$ of dimension $d$,

\[\rep_d \mathcal{A}\subset \prod_{f\in \Gamma_1} \Mat(k;e(f)\times s(f))\cong \mbb{A}[x_{ij}^f]\]

with the reduced induced scheme structure, is a subvariety\footnote{For us variety means a separated reduced scheme of finite type over an algebraically closed field $k$. We treat closed subsets of a variety as varieties with the reduced induced subscheme structure.}. Indeed if we let $x_{ij}^f$ be the coefficient in position $(i,j)$ of the matrix $\varphi(f)$, then $\rep_d \mathcal{A}=V(\Tilde{I})$, for

\[\Tilde{I}=\left(r((x_{ij}^{f_1}),\dots,(x_{ij}^{f_n}))_{ab} \; |\; r\in I, \text{ homogeneous } \: 1\leq a,b\leq d\right)\]

where $r(M_1,\dots,M_n)_{ab}$ is the polynomial, that computes the $(a,b)$ coefficient of the matrix $r(M_1,\dots,M_n)$.

The variety $\rep_d \mathcal{A}$ is stable for the natural action of the group $\prod_{a\in \Gamma_0}\GL_{d(a)}(k)$ by conjugation and its orbits are the isomorphism classes of representations of $\mathcal{A}$. Since the group  $\prod_{a\in \Gamma_0}\GL_{d(a)}(k)$ is irreducible, all the irreducible components of $\rep_d \mathcal{A}$ must also be stable under conjugation. 

By our assumption $I\subseteq k\Gamma^{+2}$ the simple submodules of $\mathcal{A}$ are in bijection with the vertices $\Gamma_0$. Because of that, the radical layering of a module $M$ can be identified with its dimension vector $(d_i)_i$, where $d_i(a)=\dim \left(\rad^i V/\rad^{i+1} V\right)(a)$. For a dimension vector $(d_i)_i$ we consider the subvariety of representations for which the radical layering has dimension vector $(d_i)_i$.

\begin{defi}
    Let $k=\rl(k\Gamma/I)$ and let $\underline{d}=(d_i)_{i\leq m}$ be a dimension vector with $\sum_i d_i=d$, and let $e_1,\dots e_{d(a)}$ be the standard basis for $k^{d(a)}$ for all $a$. Then the variety of representations with radical layering $\underline{d}$ is

    \[\rep(\underline{d},\mathcal{A})=\left\{V\in \rep_d \mathcal{A} \; | \; \rad^i(V)(a)=\langle e_1,\dots e_{D_i(a)}\rangle     \right\}\]

    where $D_i(a)=\sum_{j\geq i} d_i(a)$.
\end{defi}

Since $\rad \mathcal{A} \cdot \rad^i(M)= \rad^{i+1} M$, the representations in $\rep(\underline{d},\mathcal{A})$ have a block upper triangular form

\begin{equation}\label{blok}
    \varphi(f_i) =\begin{array}{c @{} ccccc @{} l}
            &  d_{m-1}(s(f))& d_{m-2}(s(f))&\dots& d_0(s(f)) & \\       
        \left( 
        \vphantom{\begin{array}{@{} c @{}}  0  \\ 0 \\ \vdots \\ 0 \end{array}} \right. &
        \begin{array}{@{} c @{}}  0  \\ 0 \\ \vdots \\ 0 \end{array} &
        \begin{array}{@{} c @{}} \varphi_M(f_i)_{m-1,m-2} \\ 0 \\ \vdots \\ 0 \end{array} & 
        \begin{array}{@{} c @{}} \dots \\ \ddots \\ \ddots \\ \dots \\ \end{array} &
        \left. 
        \begin{array}{@{} c @{}} \varphi_M(f_i)_{m-1,0} \\  \varphi_M(f_i)_{m-2,0} \\ \vdots \\ 0 \end{array}
        \right)
         & \begin{array}{@{} c @{}}
            d_{m-1}(e(f))  \\
            d_{m-2}(e(f)) \\
            \vdots \\
            d_0(e(f))
         \end{array}
    \end{array}
\end{equation}

$\rep(\underline{d},\mathcal{A})$ is a (quasi-affine) variety. To see this consider the following lemma:

\begin{lemma}\label{lema_odp}
    Let $(\varphi, V)$ be a representation, such that the matrix $\varphi(f)$ has a block upper triangular form~\ref{blok} for all $f\in \Gamma_1$. Then $\underline{d}=\raddim V$ iff for all $a\in \Gamma_0$ in $j\leq m-2$ the following holds

    \begin{equation}\label{pogoj'}
        d_{j+1}(a)=\dim \sum_{e(f)=a} \im \varphi(f_i)_{j+1,j}
    \end{equation}
\end{lemma}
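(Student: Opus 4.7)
The starting observation is that the block-triangular shape~\eqref{blok} is equivalent to the subspaces $V^{(i)}(a) := \langle e_1, \dots, e_{D_i(a)}\rangle$ defining a subrepresentation filtration of $V$ on which every edge, hence every element of $\rad \mathcal{A}$, acts as zero on each successive quotient $V^{(i)}/V^{(i+1)}$; in particular these quotients are semisimple. Since the radical filtration is the fastest-decreasing filtration with semisimple quotients, one gets the automatic inclusion $\rad^i V \subseteq V^{(i)}$, so the lemma is really a criterion for when this inclusion is an equality.

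For the $(\Rightarrow)$ direction my plan is to exploit the identity $\rad^{j+1} V(a) = \sum_{e(f)=a} \varphi(f)(\rad^j V(s(f)))$ together with a one-line block computation: for $v \in V^{(j)}(s(f))$, the $(j+1)$-block of $\varphi(f)(v)$ depends only on the $(j)$-block of $v$ and equals $\varphi(f)_{j+1, j}$ applied to it. Assuming $\rad^i V = V^{(i)}$ for every $i$, the projection of $\rad^{j+1} V(a) = V^{(j+1)}(a)$ onto the $(j+1)$-block $V^{(j+1)}(a)/V^{(j+2)}(a)$ is therefore $\sum_{e(f)=a} \im \varphi(f)_{j+1, j}$ and is surjective, which yields the dimension identity~\eqref{pogoj'}.

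For the $(\Leftarrow)$ direction I plan a two-stage argument. First, by upward induction on $j$, I show that the image of $\rad^j V(a)$ in the graded piece $V^{(j)}(a)/V^{(j+1)}(a)$ is the entire piece: the base case $j = 0$ is automatic, and the same block computation as above expresses the image of $\rad^{j+1} V(a)$ there as $\varphi(f)_{j+1,j}$ applied to the image of $\rad^j V(s(f))$ in the $(j)$-block, which by inductive hypothesis is all of $V^{(j)}(s(f))/V^{(j+1)}(s(f))$, so this total image reduces to $\sum_f \im \varphi(f)_{j+1, j}$ and fills the full graded piece by~\eqref{pogoj'}. Second, I close with a downward induction on $j$ with base $\rad^m V = 0 = V^{(m)}$: given $\rad^{j+1} V = V^{(j+1)}$, the surjectivity already established yields $V^{(j)} = \rad^j V + V^{(j+1)} = \rad^j V + \rad^{j+1} V = \rad^j V$.

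The main thing to keep straight is the two-step structure of the backward direction: a single induction on $j$ trying to prove $\rad^j V = V^{(j)}$ directly breaks down, because the upward step controls only the top graded layer, while the lower layers require information from larger indices. Separating the argument into upward propagation of surjectivity followed by downward propagation of equality is what makes everything go through; the rest is routine block-matrix bookkeeping.
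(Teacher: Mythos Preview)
Your argument is correct and rests on the same ingredients as the paper's: the block shape forces $\rad^i V \subseteq V^{(i)}$, and condition~\eqref{pogoj'} is then used inductively to upgrade this to an equality via the observation that the top graded block of $\varphi(f)(u)$ for $u\in V^{(j)}$ is $\varphi(f)_{j+1,j}$ applied to the top block of $u$.

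The only difference is in how the induction is packaged. The paper runs a single backward induction on the statement $V^{(i)} \subseteq \rad\mathcal{A}\cdot V^{(i-1)}$: given this for $i+1$, any $v\in V^{(i)}$ splits as $s+w$ with $s$ in the top layer and $w\in V^{(i+1)}$; the condition lifts $s$ into $\rad\mathcal{A}\cdot V^{(i-1)}$ modulo $V^{(i+1)}$, and the inductive hypothesis absorbs the remainder. Iterating then yields $V^{(i)}\subseteq \rad^i\mathcal{A}\cdot V = \rad^i V$. You instead separate an upward pass (surjectivity of $\rad^j V$ onto each graded layer) from a downward pass (turning layer-wise surjectivity into equality). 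Your organization makes the interplay between the two directions of the filtration more transparent, at the cost of an extra induction; the paper's version is a little more compact but relies on the reader noticing that the conclusion $V^{(i)}=\rad\mathcal{A}\cdot V^{(i-1)}$ must still be iterated to reach $V^{(i)}=\rad^i V$.
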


For each $j\leq $ and $a$, the condition~\ref{pogoj'} is open, since the representation satisfies this condition iff the matrix

\[(\varphi(f_1)_{j+1,j},\dots,\varphi_M(f_l)_{j+1,j})\]

where $f_i$ ranges over all $f\in \Gamma_1$ with $e(f)=a$
has full rank i.e. if there exists some maximal minor of this matrix, that does not vanish, for all $a\in \Gamma_0$. Hence this is an open subset of the variety of block upper triangular representations of $\mathcal{A}$, with given block sizes.

\begin{proof}[Proof of lemma~\ref{lema_odp}]
    First assume that the condition holds for all $i,j$. The chosen blocks define a splitting $V(a)=\bigoplus V_i(a)$. Let $W_i(a)=\bigoplus_{j\geq i}V_{j}(a)$. These define a subrepresentation, $W$. We wish to prove $\rad^i V=W_i$. The containment $\rad^i  V\subseteq W_i$ is clear since the matrices are upper triangular.

    We show the other containment by induction backwards.
    For $i=m-1$ the proposition is clear. So suppose it holds for $i+1$. Let $v\in W_{i}(a)$. Then there exist $s\in V_i(a)$ and $w\in W_{i+1}(a)$ such that $v=s+w$. The condition~(\ref{pogoj'}) then guarantees, that there exists $w'\in W_{i+1}(a)$, such that $s+w'\in \rad \mathcal{A}\cdot W_{i-1}(a)$. Indeed, $s$ can be written as a sum $s=\sum_f \varphi(f)_{i+1,i}(s_f)$, where $s_f\in V_{i-1}(s(f_j))$. So if we let $s'=\sum_f \varphi(f) s_f$, then $s'\in \rad \mathcal{A}\cdot W_{i-1}(a)$. Additionally by definition $w'=s-s'\in W_{i+1}(a)$. Hence $w+w'\in\rad \mathcal{A} \cdot W_{i}(a)$ and consequently

    \[v=s+w=s'+w'+w\in \rad \mathcal{A}\cdot W_{i-1}(a)\]

    The converse is also clear. Indeed if $i$ is the least number such that the condition~\ref{pogoj'} is not satisfied then by above $\rad^{i-1} \mathcal{A}\cdot V\subseteq W_{i-1}=W_i\oplus M_{i-1}$. Hence 
    
    \begin{align*}
        \rad^i \mathcal{A}\cdot M&\subseteq \rad \mathcal{A} \cdot W_i\oplus \rad \mathcal{A} \cdot M_{i-1}\\
        &\subseteq W_{i+1}\oplus \rad \mathcal{A}\cdot M_{i-1}
    \end{align*}
    
    which cannot contain whole $W_i(a)$, since by assumption $\im\varphi(f)_{i,i-1}$ do not cover $V_i(a)$. 
\end{proof}

Similarly, we can define the variety with given socle layering;

\begin{defi}
    Let $k=\socl(k\Gamma/I)$ and let $\underline{d}=(d'_i)_{i\leq m}$ be a dimension vector with $\sum_i d_i=d$ and let $e_1,\dots e_d$ be the standard basis for $k^d$. Then the variety of representations with socle layering $(d_i)_i$ is

    \[\rep^*(\underline{d},\mathcal{A})=\left\{V\in \rep_d \mathcal{A} \; | \; \soc^i(V)(a)=\langle e_1,\dots e_{D_i'(a)}\rangle     \right\}\]

    where $D'_i(a)=\sum_{j\leq i} d'_i(a)$.
\end{defi}

so matrices in $\rep^*(\underline{d},\mathcal{A})$ have a block upper triangular form

\begin{equation}\label{blok'}
    \varphi(f_i) =\begin{array}{c @{} cccc @{} l @{} l}
            &  d'_{0}(s(f))& d'_{1}(s(f))&\dots& d'_{m-1}(s(f)) & & \\       
        \left( 
        \vphantom{\begin{array}{@{} c @{}}  0  \\ 0 \\ \vdots \\ 0 \end{array}} \right. &
        \begin{array}{@{} c @{}}  0  \\ 0 \\ \vdots \\ 0 \end{array} &
        \begin{array}{@{} c @{}} * \\ 0 \\ \vdots \\ 0 \end{array} & 
        \begin{array}{@{} c @{}} \dots \\ \ddots \\ \ddots \\ \dots \\ \end{array} &
        \begin{array}{@{} c @{}} * \\  * \\ \vdots \\ 0 \end{array} &
        \left. \vphantom{ 
        \begin{array}{@{} l @{}} * \\  * \\ \vdots \\ 0 \end{array}}
        \right) & 
        \begin{array}{@{} c @{}}
            d'_{0}(e(f))  \\
            d'_{1}(e(f)) \\
            \vdots \\
            d'_{m-1}(e(f))
         \end{array}
    \end{array}
\end{equation}

This is again a quasi-affine variety.

We have

\[\rep_d \mathcal{A}=\coprod_{\underline{d}\in \Lambda} \GL_d(k)\cdot\rep(\underline{d},\mathcal{A})=\coprod_{\underline{d}'\in \Lambda'} \GL_d(k)\cdot\rep^*(\underline{d}',\mathcal{A})\]

where $\Lambda$ runs over all possible dimensions of radical layerings and $\Lambda'$ runs over all possible dimensions of socle layerings of $d$-dimensional $\mathcal{A}$-modules. Hence all irreducible components of $\rep_d \mathcal{A}$ must at the same time be closures of irreducible components of $\GL_d(k)\cdot \rep(\underline{d}, \mathcal{A})$ and $\GL_d(k)\cdot \rep^*(\underline{d}, \mathcal{A})$.

We will use $\rep_{\underline{d}}(\mathcal{A})$ to denote the variety $\GL_d(k)\cdot \rep(\underline{d},\mathcal{A})$, and $\rep_{\underline{d}}^*(\mathcal{A})$ for variety $\GL_d(k)\cdot \rep(\underline{d},\mathcal{A})$.

\subsection*{Semicontinuity}

In this subsection, we recall the basic properties of semicontinuous maps. 

Let $X$ be a topological space and $(\mathcal{P},\leq)$ a poset. For $p\in\mathcal{P}$, we define $[p,\infty)=\{x\in \mathcal{P} \; | \; p\leq x \}$. Similarly we define $(p,\infty),(-\infty,p)$ and $(-\infty,p]$. 

We say a map $f:X\rightarrow \mathcal{P}$ is upper semicontinuous iff preimage $f^{-1}([p,\infty)$ is closed in $X$ for all $p\in\mathcal{P}$. Similarly we say $f$ is lower semicontinuous if the preimage $f^{-1}((-\infty,p])$ is closed for all $p\in\mathcal{P}$.

For us, the most important semicontinuous maps will be the following.

\begin{defi}
    Let $\mathcal{A}$ be a finite dimensional algebra with $\rl \mathcal{A}=k$
    Consider the partially ordered set $\mathcal{P}=\left(\mbb{N}^{|{\Gamma_0}|}\right)^{m}$ with order given by $(x_i)_i\prec (y_i)_i$ iff for all $j\leq m$

    \[\sum_{i\leq j} x_i\leq \sum_{i\leq j} y_i \]

    where $x_i\leq y_i$ iff $x_i(a)\leq y_i(a)$ for all $a\in \Gamma_0$.
    Then the following maps are upper semicontinuous:

    \begin{enumerate}
        \item Dimension of radical layering: $\raddim:\rep_d(\mathcal{A})\rightarrow \mathcal{P}$, that maps the representation $M$ to its sequence of dimensions of factors in radical series, $V\mapsto (\dim \rad^k V/\rad^{k+1} V)_k$.
        \item Dimension of socle layering: $\socdim:\rep_d(\mathcal{A})\rightarrow \mathcal{P}$, that maps the representation $V$ to its sequence of dimensions of factors in socle series, $V\mapsto (\dim \soc^{k+1} V/\soc^{k} V)_k$.
        \item The product of above maps: $\vartheta: \rep_d(\mathcal{A})\rightarrow \mathcal{P}\times \mathcal{P}$, given by
        $V\mapsto (\raddim(V),\socdim(V))$.
    \end{enumerate}
\end{defi}

These maps were previously used by~\cite{ClosuresVarietiesReps, IrrCompLocal}. For the proof of semicontinuity see~\cite[Observation 2.4]{IrrCompLocal}.

We say $\mathcal{P}$ is well partially ordered if it satisfies the descending chain condition and every subset $V\subset \mathcal{P}$ has at most finitely many minimal elements. By~\cite[Observation 2.2]{IrrCompLocal} if $\mathcal{P}$ is well partially ordered and $f:X\rightarrow \mathcal{P}$ is upper semicontinuous, then $f$ is generically constant on each irreducible subvariety $V\subseteq X$ and its generic value equals $f(U)=\min\{f(x)\; | \; x\in U\}$. This in particular applies to the above maps.

\section{Irreducibility}~\label{irr}

Fix a dimension vector $\underline{d}$. We wish to define a morphism of vector bundles whose kernel is $\rep(\underline{d}, \mathcal{A})$. To do this consider the following:

Let $\UT(k;\underline{d}\times \underline{d}')$ denote the set of block upper triangular matrices with blocks of dimensions $\underline{d}\times \underline{d'}$. That is matrices with the following block upper triangular form:

\begin{equation*}
    \begin{array}{c @{} cccc @{} l @{} l}
            &  d_{m-1}& d_{m-2}&\dots& d_{0} & & \\       
        \left( 
        \vphantom{\begin{array}{@{} c @{}}  0  \\ 0 \\ \vdots \\ 0 \end{array}} \right. &
        \begin{array}{@{} c @{}}  0  \\ 0 \\ \vdots \\ 0 \end{array} &
        \begin{array}{@{} c @{}} * \\ 0 \\ \vdots \\ 0 \end{array} & 
        \begin{array}{@{} c @{}} \dots \\ \ddots \\ \ddots \\ \dots \\ \end{array} &
        \begin{array}{@{} c @{}} * \\  * \\ \vdots \\ 0 \end{array} &
        \left. \vphantom{ 
        \begin{array}{@{} l @{}} * \\  * \\ \vdots \\ 0 \end{array}}
        \right) & 
        \begin{array}{@{} c @{}}
            d'_{m-1}  \\
            d'_{m-2} \\
            \vdots \\
            d'_{0}
         \end{array}
    \end{array}
\end{equation*}

\begin{defi}
    Let $\underline{d}=(d_i)_{i=0}^{m-1}$ be a dimension vector $d=\sum_i d_i$ and let $\underline{d}'=(d_i)_{i=1}^{m-1}$. Then define the trivial vector bundle $\pi: X\rightarrow Y$ with fibre $Z$, where

    \begin{align*}
        X&=\prod_{f\in \Gamma_1} \UT(k; \underline{d}(e(f))\times \underline{d}(s(f))),\\
        Y&=\prod_{f\in \Gamma_1} \UT(k; \underline{d}'(e(f))\times\underline{d}'(s(f))),\\
        Z&=\prod_{f\in \Gamma_1} \Mat(k; d(e(f))-d_0(e(f))\times d_0(s(f)))\\
    \end{align*}

    And $\pi$ is the natural projection $\pi:X\rightarrow Y$.
\end{defi}

There exists a natural inclusion $\rep(\underline{d}, \mathcal{A})\subseteq X$. Under this inclusion, $\rep(\underline{d},\mathcal{A})\subseteq \pi^{-1}(\rep(\underline{d}',\mathcal{A}))$. Indeed, for every $M\in\rep(\underline{d},\mathcal{A})$, $\pi(M)$ is clearly a subrepresentation of $\pi(M)$ and by lemma~\ref{lema_odp} its radical layering id $\underline{d}'$.

We wish to show that this is a kernel of some morphism of vector bundles. To define this morphism, let $(r_i)_{i\leq m}$ a set of homogeneous generators of $I$ over $k\Gamma^{+k}$, so $I=(r_1,\dots,r_s)+k\Gamma^{+k}$. Then we can write $r_i=\sum_j c_{ij}g_{ij}x_{j}$, for some elements $g_{ij}$ and edges $x_j$ with $s(x_{j})=s(r_i)$, $e(x_j)=s(g_{ij})$ and $e(g_{ij})=e(r_i)$. 

\begin{defi}
    Let $\mathcal{A}$ be an algebra as above and $\underline{d}$ a dimension vector. Then we define $W$ to be the vector space;

    \[W_{\underline{d}}=\prod_{i\leq s} \Mat(k; d(s(f_i))-d_0(s(f_i))\times d_0(e(f_i)))\]

    and $\Phi_{\underline{d}}$ to be the morphism of trivial vector bundles

\begin{align*}
    \Phi_{\underline{d}}:\rep(\underline{d}',\mathcal{A})\times Z&\rightarrow \rep(\underline{d}',\mathcal{A})\times W\\
    (M,(N_j)_{j\in \Gamma_1})&\mapsto \left(M, \left(\sum_{j} c_{ij}g_{ij}(M)\cdot N_j \right)_{i\leq m}\right)
\end{align*}

\end{defi}

Treating $\rep(\underline{d}',\mathcal{A})\times Z$ as a subset of $X$, we see that the kernel of this morphism is the set of representations of $k\Gamma$

\[P_j(f)=\begin{pmatrix}
    M_j(f) & N_j(f)\\
    0 & 0
\end{pmatrix}\]

with the $0$ block of dimension $d_0$, that satisfy the relations $f_i$. Indeed,

\[r_i((P_j)_j)=\begin{pmatrix}
    r_i((M_j)_j) & \sum_{j\leq n} c_{ij}g_{ij}(M(f))\cdot N_j(f) \\
    0 & 0
\end{pmatrix}\]

is $0$ iff $M\in \rep(\underline{d};,\mathcal{A})$ and $r_i(P_j)=0$

Hence $\rep(\underline{d}, \mathcal{A})$ is the set of representations in $\ker\Phi_{\underline{d}}$, such that the matrices in block $(0,1)$ do not have a common cokernel. As mentioned before this is an open condition, so $\rep(\underline{d}, \mathcal{A})$ is (possibly empty) open subset of $\ker\Phi_{\underline{d}}$. In particular, if $\rep(\underline{d}',\mathcal{A})$ is irreducible and $\ker\Phi_{\underline{d}}$ is a vector bundle, then $\rep(\underline{d},\mathcal{A})$ is irreducible as well. To determine if $\ker\Phi_{\underline{d}}$ is a vector bundle, it is enough to determine the dimensions of its fibres;

\begin{lemma}
    Let $\rho: E\rightarrow F$ be a morphism of vector bundles, such that $\dim\ker\rho_x$ has constant dimension. Then $\ker\rho$ is a vector bundle
\end{lemma}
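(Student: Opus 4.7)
The plan is to reduce to a local matrix computation on the base and clear out by a Schur-complement trick. Since being a vector bundle is a Zariski-local property, it suffices to find, for every point $x_0$ of the base, an open neighborhood on which $\ker\rho$ is trivial. Shrinking if necessary, I would pick $U\ni x_0$ on which both $E$ and $F$ trivialize, so $E|_U\cong U\times k^n$ and $F|_U\cong U\times k^m$; in these trivializations $\rho$ is represented by a matrix $A(x)$ of regular functions on $U$, and the hypothesis becomes that $A(x)$ has constant rank $r:=n-c$, where $c=\dim\ker\rho_x$.

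Next I would use that $A(x_0)$ has rank $r$, so some $r\times r$ minor of $A(x_0)$ is non-zero. After permuting the trivializing bases (an automorphism of the trivial bundles), I may assume the non-vanishing minor is the upper-left $r\times r$ block. Shrinking $U$ further to the open locus where this minor remains non-zero, I can write
\[
A(x)=\begin{pmatrix}A_{11}(x)&A_{12}(x)\\A_{21}(x)&A_{22}(x)\end{pmatrix},
\]
with $A_{11}(x)$ invertible over $\mathcal{O}(U)$. The block operations
\[
P(x)=\begin{pmatrix}I&0\\-A_{21}A_{11}^{-1}&I\end{pmatrix},\qquad
Q(x)=\begin{pmatrix}I&-A_{11}^{-1}A_{12}\\0&I\end{pmatrix}
\]
have entries regular on $U$ and are invertible pointwise (determinant $1$), and a direct computation gives
\[
P(x)\,A(x)\,Q(x)=\begin{pmatrix}A_{11}(x)&0\\0&S(x)\end{pmatrix},
\]
where $S=A_{22}-A_{21}A_{11}^{-1}A_{12}$ is the Schur complement.

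The key use of the constant-rank hypothesis comes now: the rank of the block-diagonal matrix on the right equals $r+\rank S(x)$, but it must also equal $\rank A(x)=r$ for every $x\in U$, so $S(x)\equiv 0$ on $U$. After the bundle isomorphism over $U$ furnished by $P$ and $Q$, the morphism $\rho|_U$ becomes $(v_1,v_2)\mapsto(A_{11}(x)v_1,0)$, whose kernel is visibly $\{0\}\times k^c$. This exhibits $\ker\rho|_U$ as a trivial rank-$c$ subbundle of $E|_U$, so $\ker\rho$ is a vector bundle of rank $c$.

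The only delicate point is the Schur-complement step: it is exactly there that constant rank (as opposed to merely generic rank) is essential, since otherwise $S$ would be non-zero on the closed locus where $A$ drops rank, and the kernel would cease to be locally trivial along that locus.
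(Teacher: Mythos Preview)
Your argument is correct and complete; the Schur-complement reduction is exactly the right local computation, and your observation that $S\equiv 0$ is precisely where the constant-rank hypothesis is used.

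The paper does not actually prove this lemma: it simply cites \cite[Proposition 1.7.2]{potier1997lectures}. Your write-up is the standard elementary proof one would expect to find behind such a citation, so there is nothing to compare at the level of strategy. The only minor remark is that your $Q$ is an automorphism of the source bundle, so strictly speaking $\ker\rho|_U$ in the \emph{original} trivialization is $Q(\{0\}\times k^c)$ rather than $\{0\}\times k^c$ itself; but since $Q$ is a bundle automorphism this still exhibits $\ker\rho|_U$ as a trivial rank-$c$ subbundle, and your phrasing ``after the bundle isomorphism furnished by $P$ and $Q$'' already accounts for this.
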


\begin{proof}
    ~\cite[Proposition 1.7.2]{potier1997lectures}
\end{proof}

We can also determine the dimensions of fibres. To do this we fix the following notation: let $a\in \Gamma_0$. Then let $\Gamma_a$ denote the edges in $f\in \Gamma_1$ with $s(f)=a$ and $R_a\subseteq (r_1,\dots,r_s)$ denote the relations with start $s(r)=a$.  

\begin{lemma}~\label{dim_vlakna}
    With notation as above, let $M\in\rep(\underline{d}',\mathcal{A})$. Let $B(a)$ be the matrix

    \[B(a)=(c_{i,j}g_{ij})_{i\in R_a,\; j\in \Gamma_a}\]

    and let

    \[n(a)=\sum_{f\in\Gamma_a} d'(e(f))\]
    
    Then the dimension of $\ker\Phi_{\underline{d}}$ above the point $M$ equals

    \[\sum_{a\in\Gamma_0}\left(n(a)-\rank B(a)\right)\cdot d_0(a).
    \]
\end{lemma}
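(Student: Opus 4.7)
The plan is to compute the fibre of $\Phi_{\underline{d}}$ at $M$ directly: since $\Phi_{\underline{d}}$ is fibrewise linear in the $Z$-coordinates, the fibre $\ker \Phi_{\underline{d}}|_M$ is exactly the kernel of the linear system
\[
\sum_{j\in \Gamma_1} c_{ij}\,g_{ij}(M)\cdot N_j \;=\; 0, \qquad i=1,\dots,s,
\]
and it suffices to compute its dimension by a two-step decomposition, first by source vertex and then by column.

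First I would decompose by source vertex. For each relation $r_i$ the factorisation $r_i=\sum_{j} c_{ij}g_{ij}x_j$ forces $s(x_j)=s(r_i)$ whenever $c_{ij}\neq 0$, so the equation indexed by $i$ only couples the matrices $N_j$ with $j\in\Gamma_{s(r_i)}$. Consequently the system splits as a direct sum over $a\in\Gamma_0$: variables $(N_j)_{j\in\Gamma_a}$ are constrained only by the equations indexed by $i\in R_a$, and
\[
\dim\ker \Phi_{\underline{d}}|_M \;=\; \sum_{a\in\Gamma_0} \dim\ker \Psi_a,
\]
where $\Psi_a\colon (N_j)_{j\in\Gamma_a}\mapsto \bigl(\sum_{j\in\Gamma_a}c_{ij}g_{ij}(M)N_j\bigr)_{i\in R_a}$.

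Second, for a fixed vertex $a$ each $N_j$ with $j\in\Gamma_a$ is a matrix of size $d'(e(j))\times d_0(a)$, and left multiplication by $g_{ij}(M)$ acts column by column. So $\Psi_a$ splits further into $d_0(a)$ independent copies (one for each column index $\ell=1,\dots,d_0(a)$), each of which takes the vector $(n_j^\ell)_{j\in\Gamma_a}\in\bigoplus_{j\in\Gamma_a}k^{d'(e(j))}$ to $\bigl(\sum_j c_{ij}g_{ij}(M)n_j^\ell\bigr)_{i\in R_a}\in\bigoplus_{i\in R_a}k^{d'(e(r_i))}$. This is precisely the block matrix $B(a)=(c_{ij}g_{ij}(M))_{i\in R_a,\,j\in\Gamma_a}$ viewed as a linear map between the stated direct sums, whose domain has dimension $n(a)=\sum_{j\in\Gamma_a}d'(e(j))$. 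By rank--nullity the kernel of each column copy has dimension $n(a)-\rank B(a)$, and summing over the $d_0(a)$ columns and over all $a$ gives the claimed formula.

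The only bookkeeping point requiring care is the interpretation of $g_{ij}(M)$ as a $d'(e(r_i))\times d'(e(x_j))$ matrix between radical blocks: because $g_{ij}\in k\Gamma^{+1}$ and the matrices of $M$ are strictly block upper triangular in the $\underline{d}'$ decomposition, this restriction is well-defined and coincides with the upper-left block of $g_{ij}(P)$ computed in the preceding paragraph, so that the $(0,1)$-block of $r_i(P)$ really is $\sum_j c_{ij}g_{ij}(M)N_j$. Once this identification is in place the argument is a straightforward linear-algebraic bookkeeping exercise and no further obstacle arises.
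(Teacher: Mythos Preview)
Your proof is correct and follows essentially the same approach as the paper: both decompose the system by source vertex $a$, assemble the block matrix $B(a)$, and reduce to computing the solution space of $B(a)X=0$ for $X$ a matrix with $d_0(a)$ columns. The only cosmetic difference is that the paper invokes a Smith normal form for $B(a)$ to read off the dimension, whereas you split $X$ into its $d_0(a)$ columns and apply rank--nullity to each; these are equivalent elementary computations.
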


\begin{proof}
    By definition, $\ker(\Phi_{\underline{d}})_M$ is the vector space of all $|\Gamma_1|$-tuples of matrices $(N_{ij})_{j\in \Gamma_1}$, that satisfy the equations 

    \[\sum_{j} c_{ij}g_{ij}(M)\cdot N_j\]

    We can combine the relations with common start $a$ to get a matrix equation 

    \begin{equation*}    
    B(a)\begin{pmatrix}
        N_{j_1}\\
        \vdots\\
        N_{j_r}
    \end{pmatrix}
    =0.
    \end{equation*}

    But there exist matrices $P,Q\in \GL(k)$, such that

    \[B=P\begin{pmatrix}
        I_{r(a)} & 0\\
        0 & 0
    \end{pmatrix} Q\]

    is the decomposition of $B$ into a Smith normal form. Here $I_{r(a)}$ is the identity matrix of rank $r(a)$. Then a matrix $N$ solves the equation $BN=0$ iff $Q^{-1}N$ solves the equation 

    \[\begin{pmatrix}
        I_{r(a)} & 0\\
        0 & 0
    \end{pmatrix}X=0\]

    The space of solutions of this matrix equation has dimension $(n(a)-r(a))\cdot d_0(a)$.

    The result follows because the equations for $N_i$ with different starts are independent, so the solutions space is a direct sum of solutions spaces for each $a$.
\end{proof}

\begin{ex}
    Consider the algebra $\mathcal{A}=k\langle x_1,\dots,x_n\rangle /I$ where 

    \[I=(x_1,\dots, x_n)^m+\sum_{i=1}^n a_ix_i\]
    and $a_i=\sum c_{ij}x_j$ are linearly independent linear combinations of elements $x_1,\dots, x_n$. Let $M\in \rep(\underline{d},\mathcal{A})$ and let $M'=\rad M$. Then the $ij$-th block of the matrix $B$ in proof of proposition~\ref{dim_vlakna} equals $c_{ij}\varphi_{M'}(x_j)$.

    Further, since $a_i$ are linearly independent there exist some $d_{jk}\in k$, such that for all $j$,

    \[x_j=\sum_k d_{jk}a_k\]

    and hence

        \[\begin{pmatrix}
        \varphi_{M'}(x_1) & \dots & \varphi_{M'}(x_n)   \end{pmatrix}= B\cdot \left(\begin{pmatrix}
        d_{11}& \dots  & d_{1n}\\
        \vdots& \ddots & \vdots\\
        d_{n1}& \dots  & d_{nn}
    \end{pmatrix}\otimes I\right) \]

    Hence the image of matrix $B$ is exactly the sum of images of matrices $\varphi_{M'}(x_i)$, which is by definition equal to $\rad \mathcal{A}\cdot M'=\rad^2 M$. In particular, the dimension of fibres depends only on the radical layering of elements in $\rep(\layer, \mathcal{A})$, which is constant.
\end{ex}

The above example implies the following;

\begin{cor}\label{cover}
    Let $\mathcal{A}=k\langle x_1,\dots,x_n\rangle/I$, where 

    \[I=(x_1,\dots,x_n)^m+\left(\sum_i a_ix_i\right) \]

    and $(a_i)_i$ are linearly independent linear combinations of elements $x_1,\dots,x_n$. Then the varieties $\ker\Phi_{\underline{d}}$ are irreducible. In particular their open subvarieties $\rep(\underline{d},\mathcal{A})$ and images $\rep_{\underline{d}}(\mathcal{A})$ of those under the action of $\GL_d(k)$ are irreducible. Hence

    \[\rep_{d}(\mathcal{A})=\coprod_{\underline{d}}\rep_{\underline{d}}(\mathcal{A}) \]
    
    is a covering with irreducible quasi-affine varieties.
\end{cor}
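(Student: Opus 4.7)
The plan is to proceed by induction on the length $m$ of the dimension vector $\underline{d}=(d_0,\dots,d_{m-1})$, using the bundle construction $\Phi_{\underline{d}}$ over $\rep(\underline{d}',\mathcal{A})$ with $\underline{d}'=(d_1,\dots,d_{m-1})$. For the base case $m=1$, the block form~(\ref{blok}) forces $\varphi(f)=0$ for every edge $f$ of $\Gamma$, since $\rad V=0$ kills the image of every arrow; hence $\rep(\underline{d},\mathcal{A})$ is a single point, and irreducibility is trivial. If at any stage $\rep(\underline{d}',\mathcal{A})$ is empty then so are $\ker\Phi_{\underline{d}}$ and $\rep(\underline{d},\mathcal{A})$, and there is nothing to prove, so we may always assume nonemptiness and irreducibility of the base.

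For the inductive step, the key point is the computation carried out in the preceding example: for any $M\in\rep(\underline{d}',\mathcal{A})$ the image of the matrix $B(a)$ of Lemma~\ref{dim_vlakna} equals $\rad\mathcal{A}\cdot M'$, whose dimension is prescribed by the fixed radical layering $\underline{d}'$. Consequently the rank of $B(a)$ is constant as $M$ varies over $\rep(\underline{d}',\mathcal{A})$, and Lemma~\ref{dim_vlakna} then yields a constant fibre dimension for $\ker\Phi_{\underline{d}}\rightarrow\rep(\underline{d}',\mathcal{A})$. The lemma from~\cite{potier1997lectures} quoted just above upgrades this to the statement that $\ker\Phi_{\underline{d}}$ is a subvector bundle of the trivial bundle $\rep(\underline{d}',\mathcal{A})\times Z$. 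Since a vector bundle over an irreducible base is itself irreducible, the inductive hypothesis gives that $\ker\Phi_{\underline{d}}$ is irreducible.

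Finally, $\rep(\underline{d},\mathcal{A})$ is an open subvariety of $\ker\Phi_{\underline{d}}$ (cut out by the open rank condition of Lemma~\ref{lema_odp}), hence either empty or irreducible. In the latter case $\rep_{\underline{d}}(\mathcal{A})$, being the image of the irreducible variety $\GL_d(k)\times\rep(\underline{d},\mathcal{A})$ under the action morphism, is also irreducible. The disjoint union $\rep_d(\mathcal{A})=\coprod_{\underline{d}}\rep_{\underline{d}}(\mathcal{A})$ is exactly the stratification already recorded in section~\ref{defs}. The only substantive ingredient is the constancy of the fibre dimension, which is precisely where the hypothesis on $I$ (a single quadratic generator with linearly independent linear coefficients, on top of $(x_1,\dots,x_n)^m$) enters; everything else is a routine assembly of the vector bundle lemma, induction, and standard preservation of irreducibility under open restriction and morphisms.
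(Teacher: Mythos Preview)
Your proof is correct and follows essentially the same approach as the paper: the paper derives the corollary directly from the preceding example, which shows that $\operatorname{im} B$ equals $\rad^2 M$ and hence has dimension fixed by $\underline{d}'$, so the fibre dimension in Lemma~\ref{dim_vlakna} is constant and $\ker\Phi_{\underline{d}}$ is a vector bundle over $\rep(\underline{d}',\mathcal{A})$. You have simply made the underlying induction on the length of $\underline{d}$ explicit, which the paper leaves implicit in the recursive construction of $\Phi_{\underline{d}}$.
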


It follows that all irreducible components of $\rep_{d}(\mathcal{A})$ are among the varieties $\overline{\rep_{\underline{d}}(\mathcal{A})}$. Dually one can prove the following

\begin{cor}\label{cover_soc}
    Let $\mathcal{A}=k\langle x_1,\dots,x_n\rangle/I$, where 

    \[I=(x_1,\dots,x_n)^m+\left(\sum_i a_ix_i\right) \]

    and $(a_i)_i$ are linearly independent linear combinations of elements $x_1,\dots,x_n$. Then the varieties $ \rep^*_{\underline{d}}(\mathcal{A})$ are irreducible.
    
    In particular

    \[\rep_{d}(\mathcal{A})=\coprod_{\underline{d}} \rep^*_{\underline{d}}(\mathcal{A}) \]
    
    is a covering with irreducible quasi-affine varieties.
\end{cor}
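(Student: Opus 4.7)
The plan is to reduce to Corollary~\ref{cover} via the $k$-linear duality $D=\Hom_k(-,k)$, which gives a contravariant equivalence between finite-dimensional left $\mathcal{A}$-modules and finite-dimensional left $\mathcal{A}^{op}$-modules. A short computation using the annihilator--image duality $\ker(\varphi^T)=(\im\varphi)^\perp$ yields $\rad^iD(M)=(\soc^iM)^\perp$ and $\soc^iD(M)=(\rad^iM)^\perp$; in particular, $D$ converts the socle layering of $M$ into the radical layering of $D(M)$ with the same dimension vector and the same indexing.

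At the variety level, matrix transposition defines a $\GL_d(k)$-equivariant regular isomorphism $T:\rep_d(\mathcal{A})\xrightarrow{\sim}\rep_d(\mathcal{A}^{op})$ that sends $\rep^*_{\underline{d}}(\mathcal{A})$ onto $\rep_{\underline{d}}(\mathcal{A}^{op})$, modulo the permutation of coordinates required to reconcile the two block upper triangular conventions~\eqref{blok} and~\eqref{blok'}; since this permutation lies in $\GL_d(k)$, it is absorbed by the orbit. It remains to check that $\mathcal{A}^{op}$ still satisfies the hypothesis of Corollary~\ref{cover}. The generator $r=\sum_i a_ix_i=\sum_{i,j}c_{ij}x_jx_i$ of the quadratic relation in $\mathcal{A}$ becomes, read in $\mathcal{A}^{op}$, the element $\sum_{i,j}c_{ij}x_ix_j=\sum_j a_j'x_j$ with $a_j'=\sum_i c_{ij}x_i$; the coefficient matrix of the $a_j'$ is the transpose of $(c_{ij})$ and so has the same (full) rank, whence the $a_j'$ are again linearly independent linear forms in $x_1,\ldots,x_n$. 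Corollary~\ref{cover} applied to $\mathcal{A}^{op}$ then yields irreducibility of $\rep_{\underline{d}}(\mathcal{A}^{op})$, which transports back via $T$ to irreducibility of $\rep^*_{\underline{d}}(\mathcal{A})$, while the disjoint decomposition $\rep_d(\mathcal{A})=\coprod_{\underline{d}}\rep^*_{\underline{d}}(\mathcal{A})$ is immediate from the uniqueness of socle layering for any given module.

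The main delicate point is verifying that $T$, composed with the coordinate permutation relating~\eqref{blok} and~\eqref{blok'}, really matches the two strata on the nose; this is routine bookkeeping but must be done carefully because socle and radical are indexed in opposite orders (socle grows, radical shrinks). An alternative, more direct route is to repeat the vector bundle construction of Section~\ref{irr} verbatim with socles in place of radicals: fix a socle layering $\underline{d}=(d_0',\ldots,d_{m-1}')$, take as base the subvariety parametrising the subrepresentation $\soc^{m-1}V$ with layering $(d_0',\ldots,d_{m-2}')$, and as fibre the last block column of~\eqref{blok'}; the constant-fibre-dimension computation from Lemma~\ref{dim_vlakna}, with $a_j'$ in place of $a_i$, then goes through unchanged and yields the conclusion.
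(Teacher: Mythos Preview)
Your primary argument via $k$-linear duality is correct and is precisely what the paper's one-line ``Dually one can prove the following'' unpacks to: transposition realises $D$ at the variety level, $\mathcal{A}^{op}$ again has the required form (its coefficient matrix is $(c_{ij})^T$), and Corollary~\ref{cover} transfers back. At the orbit level you do not even need the coordinate permutation: $\rep^*_{\underline{d}}(\mathcal{A})$ and $\rep_{\underline{d}}(\mathcal{A}^{op})$ are already defined as \emph{all} modules with the given layering, so $T$ matches them directly; the permutation only enters if you insist on matching the standardised subvarieties $\rep^*(\underline{d},\mathcal{A})$ and $\rep(\underline{d},\mathcal{A}^{op})$.

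Your alternative direct route, however, has the projection pointing the wrong way. If you take the submodule $\soc^{m-1}V$ as base and the last block column of~\eqref{blok'} as fibre, the fibre map is literally the same $\Phi$ as in Section~\ref{irr}, and, exactly as in the Example after Lemma~\ref{dim_vlakna}, its rank over $M'$ equals $\dim\rad M'$. But $\dim\rad M'$ is \emph{not} constant on a socle stratum: already for $n=2$ and socle layering $(2,1)$ the radical can have dimension $1$ or $2$. The genuine dual of the Section~\ref{irr} construction projects instead to the \emph{quotient} $V/\soc V$, with base socle layering $(d_1',\ldots,d_{m-1}')$ and fibre the \emph{first} block row of~\eqref{blok'}. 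The fibre map is then $(N_j)_j\mapsto\sum_j N_j\,b_j(M')$ with $b_j=\sum_i c_{ij}x_i$; by the same linear-independence trick its rank equals $\dim M'-\dim\soc M'$, which is determined by the socle layering and hence constant over the base. This corrected version is the literal dual of Corollary~\ref{cover} and coincides with your approach via $\mathcal{A}^{op}$ under transposition.
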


One could use a similar method to show that $\rep_{\underline{d}}(\mathcal{A})$ are irreducible for some other path algebras of quivers with relations. 

\begin{ex}
    If we consider the quiver

    \[\begin{tikzcd}
	1 && 2
	\arrow["c"{description}, from=1-1, to=1-1, loop, in=145, out=215, distance=10mm]
	\arrow["b"{description}, curve={height=6pt}, from=1-1, to=1-3]
	\arrow["a"{description}, shift right, curve={height=6pt}, from=1-3, to=1-1]
\end{tikzcd}\]

    with relations $I=(a,b,c)^n+I'$ with generated by $I'$ any of the following relations

    \[\{ab+c^2,bc, bab+bc^2, ba\} \]

    Then the varieties $\rep(\underline{d},k\Gamma/I)$ are vector bundles and yield irreducible cover. However, for $I'=(bab+bc^2)$ the varieties $\rep(\underline{d},k\Gamma/I)$ are not always vector bundles and hence are not always irreducible.
\end{ex}

But it does not work for all path algebras with relations.

\begin{ex}
    For algebra $k[x,y]/(x^3,y^2)$ the $\ker\Phi_M$ does not have constant dimensions, so $\rep(\underline{d},k\Gamma/I)$ are not vector bundles and hence not irreducible.
\end{ex}

In such cases, other filtrations might work better. See, for example,~\cite{BOBINSKI2019624}.

\section{Possible radical layerings}\label{possible_layerings}

From now on we consider the algebra $\mathcal{A}=k\langle x_1,\dots,x_n\rangle/I$, where 

    \[I=(x_1,\dots,x_n)^3+\left(S\right) \]

and $S=\sum_i a_{-j}x_ix_j$, with $\det(a_{ij})\neq 0$.
By corollaries~\ref{cover} and~\ref{cover_soc} the irreducible components are at the same time closures of varieties $\rep_{\underline{d}}(\mathcal{A})$ and $\rep^*_{\underline{d}}(\mathcal{A})$.

When studying possible radical layerings and their generic socle layering, we may without loss of generality assume $S=\sum_i x_i^2$. Indeed each representation in $\rep(\underline{d},\mathcal{A})$ has a block upper triangular form

\[\begin{pmatrix}
    0 & A_i & B_i\\
    0 &  0 & C_i\\
    0 & 0 & 0
\end{pmatrix}_i\]

such that $\sum_{ij} c_{ij} A_jC_i=0$. But because $a_i$ are linearly independent the mapping 

\[(A_i)_i\mapsto (\sum_i c_{ij}A_i)\]

induces an isomorphism $\rep(\underline{d},\mathcal{A})\cong \rep(\underline{d},\mathcal{A}')$ where 

\[A'=k\langle x_1,\dots,x_n\rangle/((x_1,\dots,x_n)^3+\left(S'\right))\]

Where $S'=\sum_i x_i^2$
This isomorphism also clearly preserves radical layerings of representations.

\begin{thm}\label{rep_layer_exists_iff}
    Let $\mathcal{A}$ be as above and let $\underline{d}=(d_0,d_1,d_2)$ be a dimension vector. Then $\rep(\underline{d},\mathcal{A})$i s nonempty iff

    \begin{align*}
        d_1\leq n\cdot d_0,\\
        n\cdot d_2\leq (n^2-1)\cdot d_1
    \end{align*}
\end{thm}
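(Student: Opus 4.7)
By the reduction preceding the statement I may assume $S=\sum_i x_i^2$, so that every representation in $\rep(\underline{d},\mathcal{A})$ has block form as in~(\ref{blok}) with blocks $A_i:V_1\to V_2$, $C_i:V_0\to V_1$, $B_i:V_0\to V_2$, subject to the single nontrivial equation $\sum_i A_iC_i=0$; all other products of three generators vanish automatically. Lemma~\ref{lema_odp} tells me that the radical layering equals $(d_0,d_1,d_2)$ precisely when the combined maps $A_{\mathrm{horiz}}=(A_1,\dots,A_n):V_1^n\to V_2$ and $C_{\mathrm{horiz}}=(C_1,\dots,C_n):V_0^n\to V_1$ are both surjective. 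The plan is to prove necessity and sufficiency of the two inequalities separately.

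For necessity, $d_1\le nd_0$ is immediate from surjectivity of $C_{\mathrm{horiz}}$. The second inequality is the heart of the matter. I would work with $K=\ker A_{\mathrm{horiz}}\subseteq V_1^n$, which has dimension $nd_1-d_2$. The relation $\sum A_iC_i=0$ says exactly that $(C_1 w,\dots,C_n w)$ lies in $K$ for every $w\in V_0$, so the $i$-th coordinate projection $\pi_i:V_1^n\to V_1$ satisfies $\pi_i(K)\supseteq \im C_i$. Summing and using surjectivity of $C_{\mathrm{horiz}}$ gives $\sum_i\pi_i(K)\supseteq \sum_i\im C_i=V_1$, whence
\[d_1 \le \sum_i\dim \pi_i(K) \le n\dim K = n(nd_1-d_2),\]
which rearranges to $nd_2\le(n^2-1)d_1$.

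For sufficiency I would construct a representation explicitly from the inequalities. Set $e=\max(0,\, d_0+d_2-nd_1)$ and fix any subspace $K_0\subseteq V_0$ of dimension $e$; the two hypotheses together imply $n(d_0-e)\ge d_1$. Choose a surjection $(V_0/K_0)\otimes k^n\twoheadrightarrow V_1$ and let $C_i:V_0\to V_1$ be the composition of the quotient $V_0\twoheadrightarrow V_0/K_0$, the inclusion of the $i$-th summand, and this surjection. Then $\sum_i\im C_i=V_1$, and because every $C_i$ vanishes on $K_0$ the map $w\mapsto(C_1 w,\dots,C_n w):V_0\to V_1^n$ has image of dimension at most $d_0-e\le nd_1-d_2$. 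Consequently $V_1^n$ modulo this image has dimension at least $d_2$, so I can pick a surjection $A_{\mathrm{horiz}}:V_1^n\twoheadrightarrow V_2$ vanishing on it; extracting the $A_i$ and setting $B_i=0$ yields an element of $\rep(\underline{d},\mathcal{A})$, since the two surjectivity conditions of Lemma~\ref{lema_odp} hold and $\sum A_iC_i=0$ is built in. The hard step is locating the bound $nd_2\le(n^2-1)d_1$: the factor $n^2-1$ is not a priori visible and only emerges once one realises that the $n$ coordinate projections of the $(nd_1-d_2)$-dimensional subspace $\ker A_{\mathrm{horiz}}$ are forced to cover $V_1$ because each one contains some $\im C_i$.
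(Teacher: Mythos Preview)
Your argument is correct in both directions, but it takes a genuinely different route from the paper.

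For necessity of $nd_2\le(n^2-1)d_1$, the paper reduces to the case where the tuple $A=(A_1,\dots,A_n)$ is indecomposable as a representation of the $n$-Kronecker quiver, invokes Kac's theorem to obtain $d_1^2+d_2^2-nd_1d_2\le 1$, and then checks by hand that this forces the linear inequality except in the isolated case $(d_1,d_2)=(1,n)$, which is ruled out separately (Lemma~\ref{izjemna upodobitev}). Your argument is pure linear algebra: the $(nd_1-d_2)$-dimensional kernel $K$ of $A_{\mathrm{horiz}}$ contains the image of the stacked map $w\mapsto(C_1w,\dots,C_nw)$, so its $n$ coordinate projections cover $V_1$, giving $d_1\le n\dim K$. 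This is much more elementary and sidesteps Kac's theorem entirely.

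For sufficiency the paper builds explicit representations with $(d_1,d_2)$ equal to one of $(1,0),\dots,(1,n-1),(2,2n-1),\dots,(n,n^2-1)$ (Lemmas~\ref{lema_dim_1} and~\ref{lema_dim_gt1}), proves that every admissible $(d_1,d_2)$ is a nonnegative integer combination of these (Lemma~\ref{baza korenov}), and takes direct sums. You instead give a single uniform construction: quotient $V_0$ by a subspace of size $e=\max(0,d_0+d_2-nd_1)$ so that the vertical stack of the $C_i$ has image of dimension $\le nd_1-d_2$, then choose $A_{\mathrm{horiz}}$ surjective and vanishing on that image. This is shorter and more transparent. The paper's concrete matrices are not wasted effort, though: they reappear later when computing the generic values of $h_0$, $h_1$ and the generic socle layerings in Section~\ref{incl}, where having explicit witnesses with prescribed properties is exactly what is needed.
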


\begin{rem}\label{soc_layer_exists_iff}
    Dually one can show that with same notation $\rep^*(\underline{d},A)$ is nonempty iff

    \begin{align*}
        d_1\leq n\cdot d_0,\\
        n\cdot d_2\leq (n^2-1)\cdot d_1
    \end{align*}
\end{rem}

The first inequality is trivial because the dimension of the sum of images of $n$ matrices from $\Mat(k;{d_1\times d_0})$ cannot exceed $n\cdot d_0$.

Further, it is enough to prove the second inequality in the case $A=(A_1,\dots, A_n)$ is indecomposable. Indeed if there exists a basis such that all $A_i$ have a block diagonal form

\[A_i=\begin{pmatrix}
    A_i^{(1)} & \dots & 0\\
    \vdots & \ddots & \vdots \\
    0 & \dots & A_i^{(m)}\\
\end{pmatrix}\]

with common dimensions of blocks, then we can write

\[C_i=\begin{pmatrix}
    C_i^{(1)}\\
    \vdots\\
    C_i^{(m)}\\
\end{pmatrix}\]

and the relation $S$ becomes

\[S=\begin{pmatrix}
    \sum_i A_i^{(1)}C_i^{(1)}\\
    \vdots\\
    \sum_i A_i^{(m)}C_i^{(m)}\\
\end{pmatrix}\]

Now each block has to satisfy the inequality and hence the whole matrix has to as well.

\begin{lemma}\label{izjemna upodobitev}
    Let $A=(A_1,\dots,A_n)\in\Mat(k;{d_2\times d_1})$ be indecomposable. Suppose further that $\dim \sum \im A_i=d_2$ and $A$ are not simultaneously equivalent to 

    \[B=\begin{pmatrix}
        \begin{pmatrix}
            1 \\
            0 \\
            \vdots\\
            0
        \end{pmatrix} & \begin{pmatrix}
            0 \\
            1 \\
            \vdots\\
            0
        \end{pmatrix}& \dots & \begin{pmatrix}
            0 \\
            0 \\
            \vdots\\
            1
        \end{pmatrix}
    \end{pmatrix}\]

    that is there exist no invertible matrices $P,Q$ such that $PA_iQ^{-1}=B_i$ for all $i$.

    Then the following inequality holds

    \begin{equation*}
    n\cdot d_2\leq (n^2-1)d_1       
    \end{equation*}

\end{lemma}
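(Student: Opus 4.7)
The plan is to construct a ``companion'' representation on the kernel of the combined map $\tilde A = (A_1, \ldots, A_n)\colon V^n \to W$ (where $V = k^{d_1}$, $W = k^{d_2}$) and then apply the easy surjectivity inequality to it. Specifically, the hypothesis $\dim \sum \im A_i = d_2$ says $\tilde A$ is surjective, so $K := \ker \tilde A$ has dimension $n d_1 - d_2$. Setting $\beta_i := p_i|_K\colon K \to V$ (the restriction of the $i$-th coordinate projection) yields a new tuple $(\beta_1, \ldots, \beta_n)$. The central claim is that, under the hypotheses, $\sum_i \im \beta_i = V$. Given this, the trivial surjectivity inequality from the paragraph preceding the lemma applied to $(\beta_1, \ldots, \beta_n)$ yields
\[
d_1 \leq n \dim K = n(n d_1 - d_2),
\]
which rearranges to $n d_2 \leq (n^2 - 1) d_1$.

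To verify the central claim, I would first show by a direct dualization that failure of $\sum_i \im \beta_i = V$ is equivalent to the existence of a nonzero morphism of representations $\phi\colon M \to B$, where $M = (V, W, (A_i))$ and $B$ denotes the exceptional representation from the statement, viewed as $(k, k^n, (e_i))$. Concretely, a nonzero $f \in V^*$ annihilating $\sum_i \im \beta_i$ corresponds precisely to data $(f; w_1, \ldots, w_n)$ with $A_i^* w_i = f$ and $A_j^* w_i = 0$ for $j \neq i$, and this is precisely the data of a nonzero morphism $\phi\colon M \to B$ with $\phi_1 = f$ and $\phi_2 = (w_1, \ldots, w_n)$.

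Granting this translation, I would then show $\Hom(M, B) = 0$ under the hypotheses. Let $\phi\colon M \to B$ be nonzero and consider its image $N \subseteq B$. If $N_1 = 0$, then $N$ is a quotient of $M$ supported only at vertex $2$; but the surjectivity $\sum \im A_i = W$ forbids nonzero quotients of $M$ of this form. Hence $N_1 = k$, and since $B = (k, k^n, (e_i))$ has the property that any subrepresentation with first component $k$ must contain every $e_i$ in the second component, $N = B$ and $\phi$ is surjective. Since $B$ is the projective indecomposable at vertex $1$, the surjection $\phi$ admits a section, built explicitly by choosing $v_0 \in V$ with $\phi_1(v_0) = 1$ and setting $s_1(1) = v_0$, $s_2(e_i) = A_i v_0$; compatibility with the arrows is immediate from $\phi_2 A_i = e_i \phi_1$. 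This splitting combined with indecomposability of $M$ forces $M \cong B$, contradicting the hypothesis.

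The main obstacle is the dualization step: setting up the correspondence between ``$\sum_i \im \beta_i = V$'' and ``$\Hom(M, B) = 0$'' requires a careful calculation involving $K^\perp = \im \tilde A^*$ inside $(V^*)^n$. Once this correspondence is established, the remaining structural argument (analysis of the image of a potential morphism and the splitting via the projectivity of $B$) is routine.
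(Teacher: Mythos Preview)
Your argument is correct and takes a genuinely different route from the paper. The paper invokes Kac's theorem for the $n$-Kronecker quiver to obtain the Tits-form inequality $d_1^2+d_2^2-nd_1d_2\leq 1$, and then does an arithmetic/geometric comparison of the line $d_2=\frac{n^2-1}{n}d_1$ with the hyperbola $q=1$ to isolate the single exceptional point $(d_1,d_2)=(1,n)$. Your proof is entirely self-contained: the passage from $(A_i)$ to $(\beta_i)$ is precisely the BGP reflection at the sink, and the core of your argument is the identification of the obstruction to $\sum_i\im\beta_i=V$ with a nonzero element of $\Hom(M,B)$, after which projectivity of $B=P_1$ and indecomposability of $M$ finish things off. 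This avoids Kac's theorem altogether, at the cost of a small dualization computation (which you correctly set up via $K^\perp=\im\tilde A^{*}$). One minor remark: your case $N_1=0$ in the analysis of $\Hom(M,B)$ is in fact vacuous, since the surjectivity hypothesis $\sum\im A_i=W$ already forces $\phi_1\neq 0$ for any nonzero $\phi$; but including it does no harm. The trade-off is that the paper's approach, while relying on a black box, immediately situates the result in the broader picture of roots for the Kronecker quiver (as exploited in the rest of Section~\ref{possible_layerings}), whereas yours is more portable and elementary.
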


\begin{proof}
    $A$ as above forms an indecomposable representation of $n$-Kronecker quiver. By~\cite[Theorem 1]{Kac1980} its dimensions $(d_1,d_2)$ have to satisfy the inequality 
    
    \[q(d_1,d_2)=d_1^2+d_2^2-nd_1d_2\leq 1\]

    Further, if the equality condition is satisfied there exists exactly one class of such $n$-tuples with dimensions $(d_1,d_2)$, up to simultaneous equivalence.

    To check that inequality is satisfied we consider two cases. If $n\geq d_1$, then the line $d_2=\frac{n^2-1}{n}d_1$ lies above the hyperbola $q(d_1,d_2)=1$, so the inequality is clear.

    If $n< d_1$ on the other hand, we can estimate $nd_1-1< \frac{n^2-1}{n}d_1$. Now, since $nd_1$ is an integer, any integral point above the line $d_2=nd_1-1$ must also lie above $d_2=nd_1$. But this again lies above the hyperbola $q(d_1,d_2)=1$ unless $(d_1,d_2)=(1,n)$. 

    Hence the inequality is satisfied in all cases except for $(d_1,d_2)=(1,n)$. In this case, however, $q(d_1,d_2)=1$, so, up to simultaneous similarity, there exists only one such tuple of matrices, which is exactly the exceptional tuple in the lemma.
\end{proof}

\begin{figure}[htb]
    \begin{subfigure}[b]{0.49\textwidth}
    \resizebox{\textwidth}{!}{
    \definecolor{modra}{RGB}{71,71,255}
\definecolor{rdeca}{RGB}{255,71,71}

\begin{tikzpicture}[line cap=round,line join=round,>=triangle 45,x=1.0cm,y=1.0cm]
\begin{axis}[
x=1.0cm,y=1.0cm,
axis lines=middle,
ymajorgrids=true,
xmajorgrids=true,
xmin=0.0,
xmax=10.0,
ymin=0.0,
ymax=10.0,
xtick={0.0,1.0,...,10.0},
ytick={0.0,1.0,...,10.0},]
\addplot [modra,name path=A, domain=0:10] {(3*x+sqrt( 9*x^2-4*(x^2-1)))/2};

\addplot [modra,name path=B, domain=0:10] {(3*x-sqrt( 9*x^2-4*(x^2-1)))/2};

\addplot [modra!60!white, opacity=0.3] fill between [of=A and B];

\node [circle, fill=modra] at (1,0) {};
\node [circle, fill=modra] at (1,1) {};
\node [circle, fill=modra] at (1,2) {};
\node [circle, fill=modra] at (2,5) {};
\node [circle, fill=modra] at (3,8) {};

\node [circle, fill=rdeca] at (1,3) {};
\end{axis}
\end{tikzpicture}}
    \end{subfigure}
    \begin{subfigure}[b]{0.49\textwidth}
    \resizebox{0.968\textwidth}{!}{
    \definecolor{modra}{RGB}{71,71,255}
\definecolor{rdeca}{RGB}{255,71,71}

\begin{tikzpicture}[line cap=round,line join=round,>=triangle 45,x=1.0cm,y=1.0cm]
\begin{axis}[
x=1.0cm,y=1.0cm,
axis lines=middle,
ymajorgrids=true,
xmajorgrids=true,
xmin=0.0,
xmax=16.0,
ymin=0.0,
ymax=16.0,
xtick={0.0,1.0,...,16.0},
ytick={0.0,1.0,...,16.0},]
\addplot [modra,name path=A, domain=0:16] {(4*x+sqrt( 16*x^2-4*(x^2-1)))/2};

\addplot [modra,name path=B, domain=0:16] {(4*x-sqrt( 16*x^2-4*(x^2-1)))/2};

\addplot [modra!60!white, opacity=0.3] fill between [of=A and B];

\node [circle, fill=modra] at (1,0) {};
\node [circle, fill=modra] at (1,1) {};
\node [circle, fill=modra] at (1,2) {};
\node [circle, fill=modra] at (1,3) {};
\node [circle, fill=modra] at (2,7) {};
\node [circle, fill=modra] at (3,11) {};
\node [circle, fill=modra] at (4,15) {};

\node [circle, fill=rdeca] at (1,4) {};
\end{axis}
\end{tikzpicture}}
    \end{subfigure}
    \caption{Set of roots of $q$ and the generators of its $\mbb{N}$ linear closure. for $3$-tuples and $4$-tuples of matrices. The dimension vector that does not appear by lemma~\ref{izjemna upodobitev} is marked in red.}
    \label{fig:kronecker2}        
\end{figure}
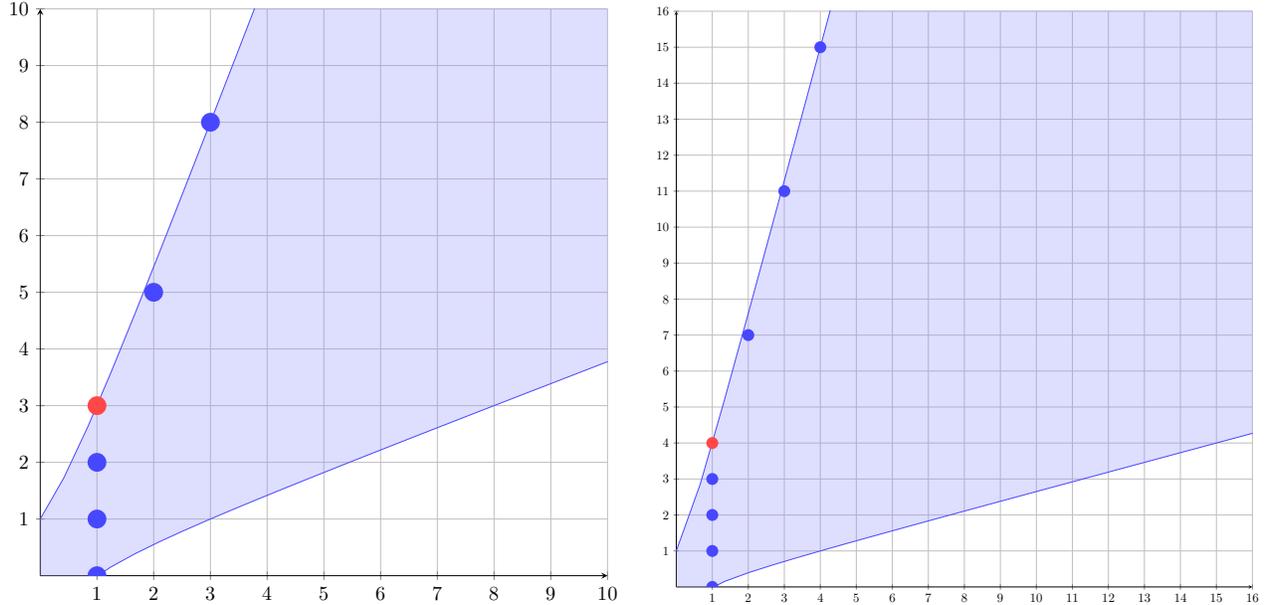

For the exceptional tuple in the above lemma, there exists no nonzero tuple $C=(C_1,\dots, C_n)$ of matrices, such that $A$ and $C$ satisfy the relation $S$. Indeed in this case each $C_i$ is a $1\times n$ matrix and 

\[\sum A_iC_i=\begin{pmatrix}
    C_1\\
    \vdots\\
    C_n
\end{pmatrix}\]

With this, we have shown that the condition in theorem~\ref{rep_layer_exists_iff} is necessary. We are left to show that it is sufficient. We provide the construction in the cases $(d_1,d_2)$ is equal to one of

\begin{align*}
    (1,0),\dots,(1,n-1) & \text{ in lemma~\ref{lema_dim_1},}\\
(2,2n-1),(3n,3n-1)\dots, (n,n^2-1)   & \text{ in lemma~\ref{lema_dim_gt1}.}
\end{align*}

This is sufficient because we can then take the direct sum of such construction to get a construction for other $(d_1,d_2)$. All $(d_1,d_2)$ that satisfy the inequality $nd_2\leq (n^2-1)d_1$ can be obtained this way by the following lemma~\ref{baza korenov}. The set of dimensions of indecomposable modules along with the above generators and the exceptional dimension vector are shown in figure~\ref{fig:kronecker2}.

\begin{lemma}\label{baza korenov}
    Every pair $(d_1,d_2)$, that satisfies the inequality $nd_2\leq (n^2-1)d_1$ can be written as a positive linear combination of pairs

    \[(1,0),\dots,(1,n-1),(2,2n-1),(3,3n-1),\dots,(n,n^2-1)\]
\end{lemma}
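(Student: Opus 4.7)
The plan is to give an explicit decomposition by splitting into two cases according to where $(d_1,d_2)$ sits relative to the line of slope $n-1$. The generators partition naturally by slope: the pairs $(1,j)$ for $j=0,\dots,n-1$ realise integer slopes $0,1,\dots,n-1$, while the pairs $(k,kn-1)$ for $k=2,\dots,n$ realise slopes $n-\tfrac{1}{2},\dots,n-\tfrac{1}{n}$. The hypothesis $nd_2\le(n^2-1)d_1$ just says that the slope $d_2/d_1$ does not exceed $n-\tfrac{1}{n}$, so $(d_1,d_2)$ lies in the rational cone spanned by the extreme generators $(1,0)$ and $(n,n^2-1)$.

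In the first case, $d_2\le(n-1)d_1$, I would decompose using only the generators $(1,j)$. Euclidean division gives $d_2=q(n-1)+r$ with $0\le r<n-1$, and from $d_2\le(n-1)d_1$ we have $q\le d_1$, so I can take $q$ copies of $(1,n-1)$, one copy of $(1,r)$ when $r>0$, and $d_1-q-[r>0]$ copies of $(1,0)$, where $[r>0]$ is the Iverson bracket.

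In the second case, $d_2>(n-1)d_1$, let $e=d_2-(n-1)d_1\ge 1$; the hypothesis rearranges to $e\le(n-1)d_1/n$. I would write $e=q(n-1)+r$ with $0\le r<n-1$, and then take $q$ copies of $(n,n^2-1)$, one copy of $(r+1,(r+1)n-1)$ when $r>0$ (note $2\le r+1\le n-1$, so this is one of the listed generators), and fill the rest with copies of $(1,n-1)$. A routine sum shows both coordinates agree with $(d_1,d_2)$.

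The main obstacle is checking that the fill count $d_1-nq-(r+1)[r>0]$ of $(1,n-1)$ in Case 2 is nonnegative. This follows from $e\le(n-1)d_1/n$, which rearranges to $nq+rn/(n-1)\le d_1$. For $1\le r\le n-2$ one has $rn/(n-1)=r+r/(n-1)$ with $r/(n-1)\in(0,1)$, so the integrality of $d_1$ forces $d_1\ge nq+r+1$; and when $r=0$ the bound reads $d_1\ge nq$ directly. Thus the decomposition has nonnegative integer coefficients in every case, completing the argument.
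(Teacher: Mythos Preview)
Your argument is correct and genuinely different from the paper's. The paper proceeds by contradiction via a minimal counterexample: it shows that a putative smallest counterexample must have $d_2\le n^2-1$ (else subtract $(n,n^2-1)$) and $d_1\le n$ (else subtract $(1,0)$), then finishes with a short case analysis inside this bounded box using the bound $d_2\le nd_1-1$ from Lemma~\ref{izjemna upodobitev}. Your proof is instead fully constructive: you split according to whether the slope $d_2/d_1$ is at most $n-1$ or strictly between $n-1$ and $n-\tfrac{1}{n}$, and in each regime you write down explicit nonnegative multiplicities via Euclidean division. The key nontrivial check---that the residual count of $(1,n-1)$ in Case~2 is nonnegative---is handled cleanly by your integrality argument $d_1\ge nq+r+\tfrac{r}{n-1}$ forcing $d_1\ge nq+r+1$ when $1\le r\le n-2$. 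Your approach has the advantage of producing an explicit decomposition formula rather than merely asserting existence, at the cost of slightly more bookkeeping; the paper's descent is terser but relies on the earlier lemma and gives no formula.
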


\begin{proof}
    For the sake of contradiction suppose $(d_1,d_2)$ is the minimal counterexample (with respect to $d_1+d_2$). Then $d_2\leq n^2-1$ as otherwise $d_1> n$ and $(d_1-n,d_2-n^2+1)$ is a smaller counterexample. Similarly $d_1\leq n$, as otherwise $(d_1-1,d_2)$ would be a smaller counterexample.

    In the proof of lemma~\ref{izjemna upodobitev}, we have shown that for such pairs $d_2\leq nd_1-1$. 
    
    If the equality is satisfied or $d_1=1$, the pair is in our set. Otherwise, $d_2\geq n$ and the inequality is also satisfied for $(d_1-1,d_2-n+1)$ or $d_2\leq n-1$ and the inequality is satisfied for $(1,d_2)$. Therefore there always exists a smaller counterexample. So there is no smallest counterexample, and the lemma is true for all pairs $(d_1,d_2)$.
\end{proof}

In the following constructions, we denote the tuple $A=(A_1,\dots, A_n)$ of matrices by a formal sum $\sum_i\lambda_iA_i$.

\begin{lemma}\label{lema_dim_1}
    Let $m\leq n-1$. Then for all $d_0\geq 1$, there exists a representation of $\mathcal{A}$ with radical layering $(d_0,1,m)$.
\end{lemma}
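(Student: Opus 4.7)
The plan is to give an explicit construction. By equation~(\ref{blok}), a representation in $\rep((d_0,1,m),\mathcal{A})$ has
\[\varphi(x_i)=\begin{pmatrix} 0 & A_i & B_i\\ 0 & 0 & C_i \\ 0 & 0 & 0\end{pmatrix}\]
where $A_i\in \Mat(k;m\times 1)$ is a column vector, $C_i\in \Mat(k;1\times d_0)$ is a row vector, and $B_i\in \Mat(k;m\times d_0)$ is arbitrary. The cube relation $x_jx_kx_l=0$ is automatic from this block structure (any triple product lands strictly above the first row and below the last column simultaneously). As we may assume $S=\sum_i x_i^2$, the single remaining defining relation reduces, after computing $\varphi(x_i)^2$, to the matrix identity
\[\sum_{i=1}^n A_iC_i=0\in\Mat(k;m\times d_0).\]

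I would then take $B_i=0$ for every $i$, set $C_1=(1,0,\dots,0)$ and $C_i=0$ for $i\geq 2$, and set $A_1=0$ together with $A_{j+1}=e_j$ for $j=1,\dots,m$ (which requires $m+1\leq n$, i.e.\ $m\leq n-1$) and $A_i=0$ for $i>m+1$. The products $A_iC_i$ are then all zero: either $A_i=0$ (for $i=1$ or $i>m+1$) or $C_i=0$ (for $2\leq i\leq n$). Hence $\sum_i A_iC_i=0$ and the relation $S$ is satisfied.

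Finally, I would verify the radical layering using Lemma~\ref{lema_odp}, which (in this single-vertex setting) demands $\dim\sum_i\im C_i=d_1=1$ and $\dim\sum_i\im A_i=d_2=m$. The first holds because $C_1\neq 0$, and the second because $A_2,\dots,A_{m+1}$ are precisely the standard basis of $k^m$. There is no substantive obstacle here: the construction is elementary, and the essential point is the bookkeeping observation that $m\leq n-1$ provides exactly one ``spare'' index allowing us to set $A_1=0$ so that the relation is satisfied trivially, while still retaining $m$ further indices whose $A_i$ span $k^m$.
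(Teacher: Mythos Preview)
Your proof is correct and essentially identical to the paper's: both give an explicit block-upper-triangular representation in which the nonzero $A_i$'s and the nonzero $C_i$ occupy disjoint index sets, so that every product $A_iC_i$ vanishes and the relation $S$ holds trivially. The only difference is cosmetic relabelling---the paper takes $A_i=e_i$ for $i\leq m$ and $C_{m+1}=(1,0,\dots,0)$, whereas you shift the indices by one---and your verification of the radical layering via Lemma~\ref{lema_odp} is exactly what is needed.
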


\begin{proof}
    The pair

    \[A=\begin{pmatrix}
        \lambda_1\\
        \lambda_2\\
        \vdots\\
        \lambda_m
    \end{pmatrix}, \qquad C=\begin{pmatrix}
        
\lambda_{m+1} & 0 & \dots & 0    \end{pmatrix}\]

    gives such a representation. In case $m=0$ this gives us a matrix

    \[\begin{pmatrix}
        0 & \lambda_1 & \dots & 0\\
        0 & 0 & \dots & 0\\
        \vdots & \vdots & \ddots & \vdots\\
        0 & 0 & \dots & 0
    \end{pmatrix}\]
\end{proof}

\begin{lemma}\label{lema_dim_gt1}
    Let $2\leq m\leq n$. Then for all $d_0\geq 1$ there exists a representation of $\mathcal{A}$ with radical layering $(d_0,m,m\cdot n-1)$.
\end{lemma}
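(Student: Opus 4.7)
The plan is to exhibit, for each $d_0 \geq 1$ and each $2 \leq m \leq n$, explicit matrix tuples $A = (A_1,\dots,A_n)$ with $A_i \in \Mat(k;(mn-1) \times m)$ and $C = (C_1,\dots,C_n)$ with $C_i \in \Mat(k;m \times d_0)$ satisfying the single relation $\sum_{i=1}^n A_i C_i = 0$ (coming from $S = \sum_i x_i^2$, which may be assumed by the reduction made above the statement of Theorem~\ref{rep_layer_exists_iff}), together with the image conditions $\sum_i \im A_i = k^{mn-1}$ and $\sum_i \im C_i = k^m$. Assembling these into a block upper-triangular matrix with arbitrary $(2,0)$-block (say $B_i = 0$) automatically gives a representation of $\mathcal{A}$, since all triple products vanish by the block structure. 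Lemma~\ref{lema_odp} then guarantees that its radical layering is exactly $(d_0, m, mn-1)$.

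The construction I would use is this. Set $V_1 = k^m$ and $W = V_1^{\oplus n}$, with basis $\{e_j^{(i)}\}_{1 \leq j \leq m,\; 1 \leq i \leq n}$ where $e_j^{(i)}$ is the $j$-th standard basis vector of the $i$-th summand. Because $m \leq n$, the element
\[
w := e_1^{(1)} + e_2^{(2)} + \cdots + e_m^{(m)} \in W
\]
is well defined, and I set $V_2 := W / kw$, which has the required dimension $mn - 1$. Define $A_i : V_1 \to V_2$ to be the inclusion of $V_1$ as the $i$-th summand of $W$ followed by the quotient, so that $A_i(e_j) = [e_j^{(i)}]$. Define $C_i : k^{d_0} \to V_1$ to send the first standard basis vector of $k^{d_0}$ to $e_i$ when $i \leq m$ (and to $0$ when $i > m$), and to kill every other standard basis vector.

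The three required properties are then immediate. The composition $\sum_i A_i C_i$ kills every basis vector of $k^{d_0}$ except the first, on which it equals $\sum_{i=1}^m A_i(e_i) = \sum_{i=1}^m [e_i^{(i)}] = [w] = 0$, so the relation holds. The joint image of $A_1,\dots,A_n$ contains all classes $[e_j^{(i)}]$, hence equals $V_2$. The joint image of $C_1,\dots,C_m$ contains $e_1,\dots,e_m$, hence equals $V_1$. The only part that might need a moment of thought is the bookkeeping: matching the paper's block labeling to identify which matrix is $A_i$ and which is $C_i$, and checking that, for $S = \sum_i x_i^2$, the relation $S = 0$ on such a representation really does read $\sum_i A_i C_i = 0$. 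Once these conventions are nailed down the construction above is essentially forced by the dimension count, since imposing a one-dimensional space of relations on an $n$-tuple of maps $V_1 \to W$ amounts to quotienting $W$ by a single vector and yields $\dim V_2 = mn - 1$.
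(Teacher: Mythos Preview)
Your argument is correct. The reduction to $S=\sum_i x_i^2$, the identification of the relation as $\sum_i A_iC_i=0$, and the two surjectivity conditions coming from Lemma~\ref{lema_odp} are exactly what is needed, and your quotient construction verifies all three cleanly: the $A_i$ jointly hit every class $[e_j^{(i)}]$, the $C_i$ jointly hit $e_1,\dots,e_m$, and the single relation $[w]=0$ is precisely $\sum_i A_iC_i=0$ evaluated on the first basis vector.

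The route, however, is genuinely different from the paper's. The paper writes down explicit matrix tuples: it takes $A_i=I_m$ in block position $i$ for $i<n$ and a carefully chosen $A_n$ (the identity shifted to the bottom plus some extra entries in the first column), together with a specific $C$ whose first column is $(-\lambda_n,\lambda_1,\dots,\lambda_{m-2},\lambda_{n-1})^t$. This is concrete but, as the paper itself notes, ``slightly technical,'' and one must then check by hand that the relation and the rank conditions hold. Your construction is instead conceptual: you take the universal tuple $A_i':k^m\hookrightarrow (k^m)^{\oplus n}$, choose the simplest possible $C$, and then quotient the target by the one-dimensional image of $\sum_i A_i'C_i$. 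This makes the dimension $mn-1$ and the relation $\sum_iA_iC_i=0$ automatic rather than something to verify, and it explains \emph{why} the answer is $mn-1$ (one relation kills one dimension). The paper's approach buys explicit matrices one can write down entrywise; yours buys transparency and a shorter argument.
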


\begin{proof}
    Consider the following representation. Let

    \[A=\begin{pmatrix}
        I_k \cdot \lambda_1\\
        \vdots\\
        I_k \cdot \lambda_{n-1}\\
        \end{pmatrix}+A_n\cdot \lambda_n\]

    where $A_n$ is the matrix

    \[A_n=\begin{pmatrix}
        0_{(m\cdot (n-1)-1)\times m }\\
        I_k
    \end{pmatrix}+\sum_{i=0}^{k-3} E_{(i+2+m\cdot i,1)}\]

    where $I_m$ is the $m\times m$-identity matrix and $E_{i,j}$ is the matrix with $1$ in component $(i,j)$ and $0$ otherwise. Here in the case $m=2$, the last sum is empty.

    Further, let

    \[C=\begin{pmatrix}
        -\lambda_n & 0 & \dots & 0 \\
        \lambda_1 & \\
        \vdots &  \vdots & & \vdots \\
        \lambda_{m-2}& \\
        \lambda_{n-1} & 0 & \dots & 0 
    \end{pmatrix}\]

Here note that the last coefficient is $\lambda_{n-1}$ and not $\lambda_{m-1}$.

This gives us a representation of algebra $\mathcal{A}$ as needed and it is not difficult to check that it has the desired radical layering.
\end{proof}

\begin{rem}
    The definition of matrix $A$ is slightly technical, here are examples for the cases $m=2,3,4$ and $n=4$

    \small{\[\begin{pmatrix}
        \lambda_1 & \cdot \\
        \cdot  & \lambda_1\\
        \lambda_2 & \cdot \\
        \cdot  & \lambda_2\\
        \lambda_3 & \cdot \\
        \lambda_4 & \lambda_3\\
        \cdot & \lambda_4
    \end{pmatrix},\qquad \qquad \begin{pmatrix}
        \lambda_1 & \cdot & \cdot\\
        \lambda_4 & \lambda_1 & \cdot\\
        \cdot & \cdot & \lambda_1\\
        \lambda_2 & \cdot & \cdot\\
        \cdot & \lambda_2 & \cdot\\
        \cdot & \cdot & \lambda_2\\
        \lambda_3 & \cdot & \cdot\\
        \cdot & \lambda_3 & \cdot\\
        \lambda_4 & \cdot & \lambda_3\\
        \cdot & \lambda_4 & \cdot\\
        \cdot & \cdot & \lambda_4
    \end{pmatrix}, \qquad \qquad \begin{pmatrix}
        \lambda_1 & \cdot & \cdot & \cdot\\
        \lambda_4 & \lambda_1 & \cdot & \cdot \\
        \cdot & \cdot & \lambda_1 & \cdot\\
        \cdot & \cdot & \cdot & \lambda_1\\
        \lambda_2 & \cdot & \cdot & \cdot\\
        \cdot & \lambda_2 & \cdot & \cdot \\
        \lambda_4 & \cdot & \lambda_2 & \cdot\\
        \cdot & \cdot & \cdot & \lambda_2\\
        \lambda_3 & \cdot & \cdot & \cdot\\
        \cdot & \lambda_3 & \cdot & \cdot \\
        \cdot & \cdot & \lambda_3 & \cdot\\
        \lambda_4 & \cdot & \cdot & \lambda_3\\
        \cdot & \lambda_4 & \cdot & \cdot \\
        \cdot & \cdot & \lambda_4 & \cdot\\
        \cdot & \cdot & \cdot & \lambda_4\\
    \end{pmatrix}\]}
\end{rem}

\section{Containments}\label{incl}

We are left with the task to determine which of the varieties $\rep_{\underline{d}}(\mathcal{A})$ are maximal for inclusion. This can be split into two parts. We need to demonstrate the containment for non-maximal varieties and separate the maximal ones.
To separate the maximal varieties, we compute their generic socle and radical layering. For ease of notation, from now on let
\begin{align*}
    \socdim(\underline{d})&=\socdim(\rep_{\underline{d}}(\mathcal{A})),\\
    \raddim(\underline{d})&=\socdim(\rep^*_{\underline{d}}(\mathcal{A}))\\    
\end{align*}

Before we continue, recall the following lemma~\cite[Lemma 2.4]{IrrCompLocal} that shows how $\raddim$ and $\socdim$ complement each other.

\begin{lemma}\label{socdim_raddim_complement}
    Let $A=k\langle x_1,\dots, x_n\rangle/I$ be an algebra with $J^n\subseteq I\subseteq J^2$. Let $M$ be a $d$-dimensional $A$-module. Then
    \begin{itemize}
        \item If $\raddim M=(d_i)_{i=0}^{n-1}$, then

        \[(d_{n-i})_{i=0}^{n-1}\preceq \socdim M\]

        and if the equality is satisfied, then $(\raddim M,\socdim M)$ is minimal in $\vartheta_{\rep_d}(A)$
            \item If $\socdim M=(d_i)_{i=1}^{n}$, then

        \[(d_{n-i})_{i=1}^n\preceq \raddim M\]

        and if the equality is satisfied, then $(\raddim M,\socdim M)$ is minimal in $\vartheta_{\rep_d}(A)$
    \end{itemize}
\end{lemma}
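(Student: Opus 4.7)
The plan is to reduce both bullets to the single inclusion
\[ \rad^k M \subseteq \soc^{n-k} M, \]
valid for every $k$. This holds because $J^n \subseteq I$ forces $J^n M = 0$, so $J^{n-k}$ annihilates every element of $J^k M = \rad^k M$, and by the characterization $\soc^{n-k} M = \{m \in M : J^{n-k} m = 0\}$ the inclusion is immediate. Everything else follows by translating this one inclusion into the language of layer dimensions.

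Taking dimensions of the inclusion gives a partial-sum inequality between the radical and socle layer dimensions. Writing $d_i = \dim \rad^i M / \rad^{i+1} M$ and $d'_i = \dim \soc^{i+1} M / \soc^i M$, one obtains $\sum_{i \geq k} d_i \leq \sum_{i < n-k} d'_i$ for all $k$, and the substitution $j = n-1-k$ turns this into $\sum_{i \leq j} d_{n-1-i} \leq \sum_{i \leq j} d'_i$, which is exactly the defining condition for the reversed radical layering to lie $\preceq$ the socle layering in the poset $\mathcal{P}$. This proves the first bullet; the second bullet is obtained by exactly the same argument with the roles of $\rad$ and $\soc$ swapped (or equivalently via the $k$-linear dual).

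For the minimality claim in the equality case, suppose $d'_i = d_{n-1-i}$ for all $i$ and let $M'$ be a $d$-dimensional module with $\vartheta(M') \preceq \vartheta(M)$. Denote the radical- and socle-layer dimensions of $M'$ by $e_i$ and $e'_i$. Applying the general inequality already proved to $M'$ yields $\sum_{i \leq j} e_{n-1-i} \leq \sum_{i \leq j} e'_i$; concatenating with the hypothesis $\sum_{i \leq j} e'_i \leq \sum_{i \leq j} d'_i = \sum_{i \leq j} d_{n-1-i}$ gives, after reindexing, $\sum_{i \geq k} e_i \leq \sum_{i \geq k} d_i$ for every $k$. The hypothesis $\raddim M' \preceq \raddim M$, together with $\sum e_i = \sum d_i = d$, forces the reverse inequality $\sum_{i \geq k} e_i \geq \sum_{i \geq k} d_i$. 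The squeeze then gives $\raddim M' = \raddim M$, and the chain $(d_{n-1-i})_i = (e_{n-1-i})_i \preceq (e'_i)_i \preceq (d'_i)_i = (d_{n-1-i})_i$ collapses to $\socdim M' = \socdim M$ as well, establishing minimality.

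The main obstacle I expect is purely notational rather than conceptual: one has to keep track carefully that the radical filtration is descending while the socle filtration is ascending, and the poset order $\preceq$ is defined via cumulative sums from the low-index end, so the natural duality shows up as a reversal of indexing. Once the core inclusion $\rad^k M \subseteq \soc^{n-k} M$ is on the table, both bullets and the squeezing argument for minimality are essentially bookkeeping.
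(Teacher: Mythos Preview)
Your proof is correct. The core inclusion $\rad^k M\subseteq\soc^{n-k}M$ (from $J^nM=0$) is exactly the right engine, and your bookkeeping with the cumulative sums and the squeeze argument for minimality are both sound. One small remark: the paper's statement writes $(d_{n-i})_{i=0}^{n-1}$, which taken literally has an undefined $d_n$; you correctly interpreted the intended reversal as $(d_{n-1-i})_i$.

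As for comparison: the paper does not actually prove this lemma. It is introduced with the words ``recall the following lemma'' and attributed to \cite[Lemma~2.4]{IrrCompLocal}, so there is no in-paper argument to compare against. Your write-up therefore supplies a self-contained proof where the paper relies on a citation; the argument you give is the standard one and is essentially what one would find in the cited reference.
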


We also have the following necessary conditions for irreducible components:

\begin{lemma}\label{lemma_min_soc_rad}
    Let $\underline{d}$ be a dimension vector, such that $\overline{\rep_{\underline{d}}(\mathcal{A})}$ is an irreducible component of $\rep_d(\mathcal{A})$ and let

    \begin{align*}
        \underline{d}'&=\socdim(\underline{d}),\\
        \underline{d}''&=\raddim(\underline{d}')\\
    \end{align*}

    Then $\underline{d}''=\underline{d}'$.
\end{lemma}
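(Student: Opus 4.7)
The plan is to argue directly from the paper's definition $\raddim(\underline{e}) = \socdim(\rep^*_{\underline{e}}(\mathcal{A}))$ together with the basic fact that socle layering is an isomorphism invariant. The component hypothesis enters only to guarantee that the relevant socle stratum is non-empty.

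First I would record the key observation that, for any socle layering $\underline{e}$ with $\rep^*_{\underline{e}}(\mathcal{A}) \neq \emptyset$, the map $\socdim$ is constant with value $\underline{e}$ on this variety. Indeed, a representation $V \in \rep^*(\underline{e},\mathcal{A})$ satisfies $\soc^i V(a) = \langle e_1, \ldots, e_{D'_i(a)} \rangle$ by definition, hence has socle layering exactly $\underline{e}$; since socle layering is an isomorphism invariant it is preserved by the conjugation action of $\GL_d(k)$, so the same is true on the orbit $\rep^*_{\underline{e}}(\mathcal{A}) = \GL_d(k) \cdot \rep^*(\underline{e},\mathcal{A})$. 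Corollary~\ref{cover_soc} provides irreducibility of this variety, and on an irreducible variety the generic value of a constant function is that constant; therefore $\raddim(\underline{e}) = \socdim(\rep^*_{\underline{e}}(\mathcal{A})) = \underline{e}$.

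Next I would verify non-emptiness of $\rep^*_{\underline{d}'}(\mathcal{A})$ so that this observation applies with $\underline{e} = \underline{d}'$. By Corollary~\ref{cover}, $\rep_{\underline{d}}(\mathcal{A})$ is irreducible, and it is non-empty because its closure is an irreducible component by hypothesis; hence the upper-semicontinuous map $\socdim$ attains its generic value $\underline{d}' = \socdim(\underline{d})$ on a dense open subset of $\rep_{\underline{d}}(\mathcal{A})$. Any representation $M$ in this subset has socle layering $\underline{d}'$, and after conjugating by a suitable element of $\GL_d(k)$ to place the socle filtration into the standard coordinate form it lies in $\rep^*(\underline{d}', \mathcal{A}) \subseteq \rep^*_{\underline{d}'}(\mathcal{A})$, which is therefore non-empty. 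Combining the two steps gives $\underline{d}'' = \raddim(\underline{d}') = \underline{d}'$ as claimed.

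There is no substantial obstacle here — the argument is essentially a chase through the definitions once one notices that $\raddim$, as defined in the paper, simply reads off the constant socle-layering value on the chosen socle stratum. The only technical point to check carefully is $\GL_d(k)$-invariance of the socle-layering function on $\rep^*(\underline{e},\mathcal{A})$, which is automatic from the module-theoretic (hence isomorphism-invariant) definition of socle. The component hypothesis is needed only to populate $\rep^*_{\underline{d}'}(\mathcal{A})$; it is not used in the equality of dimension vectors itself.
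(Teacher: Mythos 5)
Your proposal proves a tautology created by two slips in the paper's text rather than the statement the lemma actually carries. The displayed definition $\raddim(\underline{d})=\socdim(\rep^*_{\underline{d}}(\mathcal{A}))$ is a typo: as the later corollary computing $\raddim\underline{d}$ via $h_1'$ shows (it yields nontrivial values such as $(d_2+h,d_1-h,d_0)$), $\raddim(\underline{e})$ is meant to be the \emph{generic radical layering} of the socle stratum $\rep^*_{\underline{e}}(\mathcal{A})$, which is not trivially equal to $\underline{e}$. Likewise the stated conclusion $\underline{d}''=\underline{d}'$ should read $\underline{d}''=\underline{d}$: this is exactly the form in which the lemma is invoked afterwards, where the necessary condition for a component is $\raddim(\socdim(\underline{d}))=\underline{d}$. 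Your closing observation that the whole thing is a definition chase and that the component hypothesis only serves to make $\rep^*_{\underline{d}'}(\mathcal{A})$ non-empty should have been a warning sign: a lemma whose content were purely definitional would not need the maximality hypothesis at all, whereas in the paper's proof maximality is the crux.

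The intended statement requires an argument your proposal does not contain. The paper argues as follows: $\socdim$ is upper semicontinuous and generically equal to $\underline{d}'$ on the irreducible variety $\overline{\rep_{\underline{d}}(\mathcal{A})}$, so the locus with socle layering $\underline{d}'$ is dense open in it, giving $\overline{\rep_{\underline{d}}(\mathcal{A})}\subseteq\overline{\rep^*_{\underline{d}'}(\mathcal{A})}$; applying the same reasoning to $\raddim$ on the irreducible variety $\overline{\rep^*_{\underline{d}'}(\mathcal{A})}$ gives $\overline{\rep^*_{\underline{d}'}(\mathcal{A})}\subseteq\overline{\rep_{\underline{d}''}(\mathcal{A})}$, hence $\overline{\rep_{\underline{d}}(\mathcal{A})}\subseteq\overline{\rep_{\underline{d}''}(\mathcal{A})}$. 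Because $\overline{\rep_{\underline{d}}(\mathcal{A})}$ is an irreducible \emph{component}, this containment forces equality of the two closures, and since the strata $\rep_{\underline{e}}(\mathcal{A})$ are disjoint and $\raddim$ is generically constant on irreducible varieties, equality of closures forces $\underline{d}''=\underline{d}$. Your two observations (constancy of $\socdim$ on a socle stratum, and non-emptiness of $\rep^*_{\underline{d}'}(\mathcal{A})$ via generic attainment of $\socdim$ on the component) are correct but peripheral; neither the chain of closure containments nor the use of maximality appears in your argument, so it cannot be repaired into a proof of the intended claim without adding that entire mechanism.
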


\begin{proof}
    Suppose this is not true. Because $\socdim$ is upper semicontinuous $\rep^*_{\underline{d}'}(\mathcal{A})\cap \overline{\rep_{\underline{d}}(\mathcal{A})}$ is open in $\overline{\rep_{\underline{d}}(\mathcal{A})}$. But then, since $\overline{\rep_{\underline{d}}(\mathcal{A})}$ is irreducible,

    \[\overline{\rep^*_{\underline{d}'}(\mathcal{A})}\supseteq \overline{\rep_{\underline{d}'}(\mathcal{A})}\]

    similarly

    \[\overline{\rep_{\underline{d}''}(\mathcal{A})}\supseteq \overline{\rep_{\underline{d}'}^*(\mathcal{A})}\]

    and hence     
    
    \[\overline{\rep_{\underline{d}''}(\mathcal{A})}\supseteq \overline{\rep_{\underline{d}}(\mathcal{A})}\]

    From maximality of $\overline{\rep^*_{\underline{d}'}(\mathcal{A})}$ and the fact that $\raddim$ is generically constant on irreducible varieties it follows, that $\underline{d}''=\underline{d}'$.
\end{proof}

As before every representation $M=(M_1,\dots,M_n)$  in $\rep(\underline{d},\mathcal{A})$ has a block upper triangular form induced by $\underline{d}$;

\begin{equation}\label{form}
    M_i=\begin{pmatrix}
    0 & A_i & B_i\\
    0 & 0 & C_i \\
    0 & 0 & 0
\end{pmatrix}  
\end{equation}

\begin{defi}
    With notation as above consider the maps $k_0,k_1:\rep(\underline{d},\mathcal{A})\rightarrow \mbb{N}$.

    \begin{align*}
        h_0:M&\mapsto \dim \bigcap_i \ker C_i,\\
        h_1:M&\mapsto \dim \bigcap_i \ker A_i
    \end{align*}
    
\end{defi}

These maps are upper semicontinuous because they are determined by ranks of matrices 

\[\begin{pmatrix}
    C_1\\
    \vdots\\
    C_n
\end{pmatrix} \qquad \text{and} \qquad \begin{pmatrix}
    A_1\\
    \vdots\\
    A_n
\end{pmatrix}\]

\begin{defi}
    Dually we also consider the maps $h'_0,h'_1:\rep^*(\underline{d},\mathcal{A})\rightarrow \mbb{N}$.

    \begin{align*}
        h'_0:M&\mapsto d_1-\dim \sum_i \im A_i,\\
        h'_1:M&\mapsto d_2-\dim \sum_i \im C_i
    \end{align*}
    
\end{defi}

These are again upper semicontinuous because they are given by ranks of matrices

\[\begin{pmatrix}
    A_1 & \dots & A_n
\end{pmatrix} \qquad \text{and} \qquad \begin{pmatrix}
    C_1 & \dots & C_n
\end{pmatrix}\]

If there is some representation $M\in\rep(\underline{d},\mathcal{A})$, such that $h_0(M)=h_1(M)=0$, then the radical filtration for $M$ is also its socle filtration\footnote{Note that this means that their dimension vectors are flipped}, by the dual version of lemma~\ref{lema_odp}. Hence by~\ref{socdim_raddim_complement}, $\vartheta(\rep(\underline{d},\mathcal{A}))$ is minimal and since no two potential components have the same generic values of $\vartheta$, $\rep_{\underline{d}}(\mathcal{A})$ must be a component.

Hence it is enough to consider what happens when one of $h_i$ is not equal to $0$. By semicontinuity this means that $h_i(M)\neq 0$,  for all $M\in\rep(\underline{d},\mathcal{A})$.

First consider the case $h_0(\rep(\underline{d},\mathcal{A}))\neq 0$. Then every representation $M\in\rep(\underline{d},\mathcal{A})$ is isomorphic to some matrix with block upper triangular form

\[M_i= \bordermatrix{
        &d_2&d_1&1&d_0-1\cr
    d_2 & 0 & A_i & * & B_i' \cr
    d_1 & 0 & 0 & 0 & C_i' \cr
    d_0 & 0 & 0 & 0 & 0 \cr
    }\]

Let $\underline{d}'=(d_0-1,d_1+1,d_2)$. Then $M$ lies in $\ker\Phi_{\underline{d'}}$. In particular, if $\rep_{\underline{d}'}(\mathcal{A})$ is nonempty, $M$ lies in its closure.

We have shown

\begin{lemma}
    Let $\underline{d}=(d_0,d_1,d_2)$ be a dimension vector of a radical layering, such that $h_0(M)\neq 0$ for all $M\in\rep(\underline{d},\mathcal{A})$. If variety $\rep(\underline{d}',\mathcal{A})$, where $\underline{d}'=(d_0+1,d_1-1,d_2)$ is nonempty, then  $\rep_{\underline{d}}(\mathcal{A})$ is not an irreducible component.
\end{lemma}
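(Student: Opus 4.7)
The plan is to show that every $M \in \rep(\underline{d},\mathcal{A})$, after an appropriate change of basis, lies in $\ker\Phi_{\underline{d}'}$, and then to promote this to a strict inclusion $\overline{\rep_{\underline{d}}(\mathcal{A})} \subsetneq \overline{\rep_{\underline{d}'}(\mathcal{A})}$, so that $\overline{\rep_{\underline{d}}(\mathcal{A})}$ fails maximality.

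First I fix an arbitrary $M \in \rep(\underline{d},\mathcal{A})$ written in the block upper triangular form~(\ref{form}). The hypothesis $h_0(M) \neq 0$ produces a nonzero vector $v \in V_0$ that is annihilated by every $C_i$; equivalently, $x_i v$ lies in $V_2 = \rad^2 M$ for all $i$. I would then apply an element of $\GL_{d_0} \subseteq \GL_d$ that places $v$ as a distinguished basis vector of $V_0$, transforming $M$ into the block form exhibited in the paragraph preceding the lemma, in which the row and column corresponding to $v$ are detached from the $V_0$-blocks. Regrouping these with $V_1$, one reads the matrix as block upper triangular with respect to the shifted dimension vector $\underline{d}'$, so the translated $M$ satisfies the block constraints defining the ambient space of $\ker\Phi_{\underline{d}'}$. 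Because a basis change preserves the $\mathcal{A}$-module structure of $M$, the defining relations of $\mathcal{A}$ remain satisfied, so this translate of $M$ actually lies in $\ker\Phi_{\underline{d}'}$.

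By Corollary~\ref{cover}, $\ker\Phi_{\underline{d}'}$ is irreducible and contains $\rep(\underline{d}',\mathcal{A})$ as a nonempty open subvariety (using the hypothesis that the latter is nonempty), so $\rep(\underline{d}',\mathcal{A})$ is dense in $\ker\Phi_{\underline{d}'}$. Therefore the translate of $M$ constructed above lies in $\overline{\rep(\underline{d}',\mathcal{A})}$, and taking $\GL_d$-saturations yields $\rep_{\underline{d}}(\mathcal{A}) \subseteq \overline{\rep_{\underline{d}'}(\mathcal{A})}$, hence $\overline{\rep_{\underline{d}}(\mathcal{A})} \subseteq \overline{\rep_{\underline{d}'}(\mathcal{A})}$. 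Since $\raddim$ is generically constant on each irreducible variety $\overline{\rep_{\underline{d}''}(\mathcal{A})}$ with generic value exactly $\underline{d}''$, and $\underline{d} \neq \underline{d}'$, the two closures carry distinct generic radical layerings and are therefore distinct; the inclusion is strict, and $\overline{\rep_{\underline{d}}(\mathcal{A})}$ cannot be an irreducible component. The main obstacle is really the bookkeeping in the first step: one has to verify that the regrouped block matrix really defines a point of $\ker\Phi_{\underline{d}'}$, but since the operation is only a relabelling of a basis of the same underlying $\mathcal{A}$-module, the algebra relations are automatically preserved in the new block decomposition.
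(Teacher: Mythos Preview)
Your argument is correct and matches the paper's own proof, which is the short paragraph immediately preceding the lemma statement: put $M$ in a block form isolating a vector in $\bigcap_i \ker C_i$, regroup to land in $\ker\Phi_{\underline{d}'}$, and use that $\rep(\underline{d}',\mathcal{A})$ is nonempty open in the irreducible variety $\ker\Phi_{\underline{d}'}$ to conclude $M$ lies in the closure. You add the explicit justification for strictness of the inclusion via the generic value of $\raddim$, which the paper leaves implicit.
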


To use this we need to compute $h_0(\rep(\underline{d},\mathcal{A}))$:

\begin{prop}\label{value_k0}
    Let $\underline{d}=(d_0,d_1,d_2)$ be a dimension vector of radical filtration, such that $\rep(\underline{d},\mathcal{A})$ is nonempty. Then

    \[h_0(\rep(\underline{d},\mathcal{A}))=\max(d_0-(n\cdot d_1-d_2),0)\]
\end{prop}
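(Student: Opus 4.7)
The plan is to translate $h_0$ into the rank of a single stacked matrix, establish the upper bound via the defining relation $S$, and then get achievability for free from the vector bundle structure of Section~\ref{irr}.

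Write $\tilde C\colon k^{d_0}\to k^{nd_1}$ for the vertical stacking of the blocks $C_i$. Then $\bigcap_i\ker C_i=\ker\tilde C$, so
\[h_0(M)=d_0-\rank\tilde C.\]
Since $h_0$ is upper semicontinuous and $\rep(\underline d,\mathcal A)$ is irreducible (Corollary~\ref{cover}), its generic value equals its minimum; it therefore suffices to show that the generic value of $\rank\tilde C$ on $\rep(\underline d,\mathcal A)$ is $\min(d_0,\,nd_1-d_2)$.

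For the upper bound, after the reduction to $S=\sum_i x_i^2$ made in Section~\ref{possible_layerings}, the relation on the block-triangular form~(\ref{form}) becomes simply $\sum_i A_iC_i=0$. Packaging the $A_i$ into $\tilde A\colon k^{nd_1}\to k^{d_2}$, $(w_1,\dots,w_n)\mapsto\sum_i A_iw_i$, this reads $\tilde A\tilde C=0$, i.e.\ $\im\tilde C\subseteq\ker\tilde A$. Because $M\in\rep(\underline d,\mathcal A)$, Lemma~\ref{lema_odp} forces $\sum_i\im A_i=k^{d_2}$, so $\tilde A$ is surjective and $\dim\ker\tilde A=nd_1-d_2$. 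Together with the trivial bound $\rank\tilde C\leq d_0$ this yields $\rank\tilde C\leq\min(d_0,nd_1-d_2)$, hence $h_0(M)\geq\max(d_0-(nd_1-d_2),0)$.

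For achievability, I would use that $\ker\Phi_{\underline d}$ is irreducible (Corollary~\ref{cover}) and that $\rep(\underline d,\mathcal A)$ is by hypothesis a nonempty open subvariety of it, hence dense. On the other hand, over any fixed $A\in\rep(\underline d',\mathcal A)$ with $\underline d'=(d_1,d_2)$, the fibre of $\ker\Phi_{\underline d}$ is $\Mat(k;d_2\times d_0)^n\times\Hom(k^{d_0},\ker\tilde A)$, parameterising $B$ and $C$ independently, and a generic element of $\Hom(k^{d_0},k^{nd_1-d_2})$ has rank $\min(d_0,nd_1-d_2)$. Thus the locus $\{\rank\tilde C=\min(d_0,nd_1-d_2)\}$ is open and nonempty in $\ker\Phi_{\underline d}$, hence dense. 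Two dense open subsets of the irreducible variety $\ker\Phi_{\underline d}$ must meet, producing a point of $\rep(\underline d,\mathcal A)$ attaining the lower bound; by semicontinuity this is the generic value.

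The main obstacle is that one cannot naively pick a $C\in\Hom(k^{d_0},\ker\tilde A)$ of maximal rank and expect the separate open condition $\sum_i\im C_i=k^{d_1}$ (which cuts out $\rep(\underline d,\mathcal A)$ inside $\ker\Phi_{\underline d}$) to automatically hold. Trying to modify the explicit representations of Lemmas~\ref{lema_dim_1} and~\ref{lema_dim_gt1} to witness both properties at once is awkward because direct sums of rank-maximal building blocks are generally not rank-maximal; invoking irreducibility of $\ker\Phi_{\underline d}$ sidesteps this difficulty cleanly.
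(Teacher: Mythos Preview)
Your proof is correct. The upper bound argument is identical to the paper's (both amount to Lemma~\ref{relation}: each column of $\tilde C$ lies in $\ker\tilde A$, which has dimension $nd_1-d_2$). For achievability the paper proceeds constructively: starting from a point of the nonempty $\rep(\underline d,\mathcal A)$, it keeps the $A$-blocks, takes a basis of the column span of $\tilde C$ (so the surjectivity condition $\sum_i\im C_i=k^{d_1}$ is preserved), extends this to a basis of $\ker\tilde A$, and then either pads with zero columns (if $d_0\ge nd_1-d_2$) or selects $d_0$ of these columns containing the original ones (if $d_0<nd_1-d_2$). Your route is different and cleaner: you observe that the rank-maximal locus and $\rep(\underline d,\mathcal A)$ are both nonempty open subsets of the irreducible variety $\ker\Phi_{\underline d}$, hence meet. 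This exploits Corollary~\ref{cover} directly and avoids the small bookkeeping step (implicit in the paper) of checking that the modified columns still satisfy the surjectivity condition defining $\rep(\underline d,\mathcal A)$; the price is that you do not get an explicit witness. Your final paragraph correctly identifies exactly this trade-off.
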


To prove this we need the following lemma;

\begin{lemma}\label{relation}
    Let $A=(A_i)_{i\leq n}\in \Mat(k;{a\times b})^n$ be a $n$-tuple of matrices, such that $\dim \sum_i \im A_i=a$ and let $V$ be the space of all $n$-tuples of vectors $v=(v_i)_n$ in $k^b$ that satisfy the equation

    \[\sum A_iv_i=0\]

    Then 

    \[\dim V=\max (n\cdot b-a,0)\]
\end{lemma}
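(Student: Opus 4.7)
The plan is to reformulate the statement as a rank-nullity computation for a single linear map. Concretely, let me assemble the matrices $A_1, \dots, A_n$ into a linear map
\[
T: (k^b)^n \to k^a, \qquad T(v_1, \dots, v_n) = \sum_{i=1}^n A_i v_i.
\]
Then by definition $V = \ker T$, and the image of $T$ is precisely $\sum_i \im A_i$. The hypothesis $\dim \sum_i \im A_i = a$ therefore says exactly that $T$ is surjective, so the rank-nullity theorem yields $\dim V = n \cdot b - a$.

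To match the stated formula I then just need to check that the maximum with $0$ is vacuous under the hypothesis. Since $\sum_i \im A_i$ is a subspace of $k^a$ of dimension $a$, and it is generated by the columns of the $A_i$'s (a total of at most $nb$ vectors), we have $a \leq nb$, hence $nb - a \geq 0$ and $\max(nb-a, 0) = nb - a$. The $\max$ in the statement is thus a convenience that keeps the formula sensible in degenerate situations, but under the hypothesis it contributes nothing.

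I do not foresee a main obstacle: the lemma is essentially a one-line application of rank-nullity once the correct linear map is identified. The only thing to be careful about is not conflating the image of $T$ with something smaller (for instance, one might accidentally think of $T$ acting on the diagonal $\{(v, v, \dots, v)\}$, which would give the join $\bigcup_i \im A_i$ rather than the sum); stating $T$ on the full product $(k^b)^n$ resolves this immediately and makes the rank equal to the dimension of the sum of images.
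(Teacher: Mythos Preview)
Your proof is correct and is essentially the same as the paper's: the paper phrases it as ``a linearly independent system of $a$ equations in the $nb$ unknowns $(v_{i,j})$'', which is exactly your rank-nullity computation for the map $T(v_1,\dots,v_n)=\sum_i A_i v_i$, whose surjectivity is the hypothesis $\dim\sum_i\im A_i=a$. Your added remark that $nb\geq a$ (so the $\max$ is vacuous) is a detail the paper leaves implicit.
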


\begin{proof}
    The above condition gives us a linearly independent system of $a$ equations for the vector $\Tilde{v}=(v_{i,j})_{i,j}$.
\end{proof}

\begin{proof}[Proof of proposition~\ref{value_k0}]
    Consider tuples of upper triangular matrices as in~\ref{form}. If the tuple $A$ has a trivial common cokernel, the relation $S$ induces a matrix equation for the columns of $C$. Hence, by lemma~\ref{relation}, the space of possible columns has dimension $n\cdot d_1-d_2$. ($nd_1\geq d_2$ by theorem~\ref{rep_layer_exists_iff}). Hence there can be at most $n\cdot d_1-d_2$ linearly independent columns.

    Since $\rep(\underline{d},\mathcal{A})$ is non-empty, there exists some set of linearly independent columns such that the matrices $C$ have trivial common cokernel. We can extend this to a maximal set of linearly independent columns. If $d_0\geq n\cdot d_1-d_2$ we further extend by $0$ columns to get a representation $M\in \rep(\underline{d},\mathcal{A})$ with $h_0(M)=d_0-(n\cdot d_1-d_2)$. Otherwise, take a subset of these with $d_0$ elements to get a representation with $h_0(M)=0$.
\end{proof}

Suppose now that $\underline{d}=(d_0,d_1,d_2)$ is a dimension vector, such that $\rep(\underline{d},\mathcal{A})$ is an irreducible component with $h_0(\rep(\underline{d},\mathcal{A}))\neq 0$. This means that for $\underline{d'}=(d_0+1,d_1-1,d_2)$, the variety $\rep(\underline{d'},\mathcal{A})$ is empty. By theorem~\ref{rep_layer_exists_iff} for $\underline{d}$ and using the fact that one column of $C$ is linearly dependent on the others, we get.

\[d_1\leq n\cdot (d_0-1) \qquad \text{and} \qquad n\cdot d_2\leq (n^2-1)\cdot d_1\]
Similarly applying the theorem~\ref{rep_layer_exists_iff} at least one of the following inequalities must be satisfied

\[d_1+1> n\cdot (d_0-1), \qquad n\cdot d_2 > (n^2-1)\cdot (d_1+1) \]

The second inequality leads to a contradiction

\[n\cdot d_2> (n^2-1)\cdot (d_1+1)>(n^2-1)\cdot d_1\geq n\cdot d_2\]

Hence the first must be satisfied. But then

\[d_1\leq n\cdot (d_0-1)< d_1+1\]

Hence $d_1=n\cdot (d_0-1)$. Further since $h_0(\rep(\underline{d},\mathcal{A})\neq 0$ lemma~\ref{value_k0} implies

\[1\leq d_0-n\cdot d_1+d_2\]

or equivalently

\[(n-1/n)d_1\leq d_2\]

Since the opposite inequality also holds, there exists some $a$, such that $\underline{d}=(a,n(a-1),(n^2-1)(a-1))$.

To prove that this dimension vector indeed induces irreducible components, check that the matrices with the above radical layering can be written in the form

\[\begin{pmatrix}
    0 & \begin{pmatrix}
        P & & 0  \\
         & \ddots & \\
         0 & & P
    \end{pmatrix} & B\\
    0 & 0 & \begin{pmatrix}
        0 & Q & & 0\\
        \vdots & & \ddots & \\
        0 & 0 & & Q
    \end{pmatrix}\\
    0 & 0 & 0
\end{pmatrix}\]

where $P, Q$ are indecomposable $n$-tuples of matrices with dimensions $(n^2-1)\times n$ and $n\times 1$ respectively. Now since the ratio of dimensions of the upper middle block $\bigoplus P$ is $\frac{n^2-1}{n}<n$, we can choose the first column of $B$ to be linearly independent of the columns of $\bigoplus P$. Then one can check that the socle layering for this representation is $((n^2-1)(a-1),n(a-1)+1,a-1)$. Hence it is enough to check that the pair

\[(\underline{d},\underline{d}')=\Big((a,n(a-1),(n^2-1)(a-1)),((n^2-1)(a-1),n(a-1)+1,a-1)\Big)\]

is minimal in $\vartheta(\rep_d(\mathcal{A}))$. By lemma~\ref{socdim_raddim_complement} for the given radical layering, the socle layering is greater than $((n^2-1)(a-1),n(a-1), a)$.
But, by assumption $k_0(\underline{d})=1$, so it must be at least $(n^2-1)(a-1)),((n^2-1)(a-1),n(a-1)+1,a-1)$. Similarly by lemma~\ref{socdim_raddim_complement} the radical layering must be at least $(a-1,n(a-1),(n^2-1)(a-1))$. But this does not satisfy the condition from theorem~\ref{rep_layer_exists_iff}. Hence the smallest possible radical layering is $(a,n(a-1),(n^2-1)(a-1))$. Hence the above pair is minimal, and $\overline{\rep(\underline{d}, \mathcal{A})}=\overline{\rep^*(\underline{d}',\mathcal{A})}$ is an irreducible component.

We are left with the case $h_1\neq 0$. As before we compute the generic value of $h_1$

\begin{lemma}
    Let $\underline{d}=(d_0,d_1,d-2)$ be a dimension vector of radical layering, such that $\rep(\underline{d},\mathcal{A})$ is nonempty. Then

    \[h_1(\rep(\underline{d},\mathcal{A}))=\max(d_1-n\cdot d_2,0)\]
\end{lemma}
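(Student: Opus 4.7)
Proof plan. I would prove this by mirroring the argument of Proposition~\ref{value_k0}, with the rank of the vertically stacked matrix
\[
\bar A \;=\; \begin{pmatrix} A_1 \\ \vdots \\ A_n \end{pmatrix} \colon k^{d_1} \to k^{n d_2}
\]
playing the role that the rank of the stacked $C$'s played there. The first key observation is that $\bigcap_i \ker A_i = \ker \bar A$, so $h_1(M) = d_1 - \rank \bar A \ge d_1 - \min(d_1, n d_2) = \max(d_1 - n d_2, 0)$ for every $M \in \rep(\underline{d}, \mathcal{A})$. Since matrix rank is lower semicontinuous and $\rep(\underline{d}, \mathcal{A})$ is irreducible by Corollary~\ref{cover}, it suffices to exhibit a single representation achieving equality.

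To produce such a representation I would first choose an $n$-tuple $(A_i)$ of $d_2 \times d_1$ matrices satisfying both $\rank \bar A = \min(d_1, n d_2)$ and $\sum_i \im A_i = k^{d_2}$; since Theorem~\ref{rep_layer_exists_iff} guarantees $d_2 \le n d_1$, a ``staircase'' choice (each $A_i$ an identity block covering distinct rows of $k^{d_2}$ and distinct columns of $k^{d_1}$) realizes both conditions simultaneously. Then, applying Lemma~\ref{relation} column-by-column to the relation $\sum_i A_i C_i = 0$, the compatible $n$-tuples $(C_i)$ form a linear subspace of $\Mat(k; d_1 \times d_0)^n$ of dimension $d_0(n d_1 - d_2)$. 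The remaining step is to pick a $C$ in this subspace also satisfying the open radical-layering condition $\sum_i \im C_i = k^{d_1}$ required by Lemma~\ref{lema_odp}.

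The main obstacle I expect is this last nonemptiness check. Taking a direct sum of basic representations from Lemmas~\ref{lema_dim_1} and~\ref{lema_dim_gt1} works whenever the decomposition of $(d_1, d_2)$ provided by Lemma~\ref{baza korenov} avoids the piece $(1, 0)$, since all other basic pieces individually achieve maximal $\rank \bar A$. When the decomposition is forced to include $(1, 0)$ (as happens for instance when $(d_1, d_2) = (2, 1)$ with $n = 2$), a pure direct sum falls short by exactly the number of $(1,0)$ summands, and a mixed, non-block-diagonal construction is required. I would handle this by keeping the staircase $A$ fixed and choosing the free parameters of $C$ inside the $d_0(nd_1 - d_2)$-dimensional solution space so that its columns collectively span $k^{d_1}$; equivalently, one can check that the ``bad'' locus inside this affine space—cut out by vanishing of the maximal minors of $[C_1 \mid \cdots \mid C_n]$—has positive codimension whenever $\rep(\underline d, \mathcal{A})$ is non-empty, which uses $d_1 \le nd_0$ from Theorem~\ref{rep_layer_exists_iff}.
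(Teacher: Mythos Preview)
Your lower bound is exactly the paper's, and the reduction to ``exhibit one representation with $\rank\bar A=\min(d_1,nd_2)$'' is the right move. The gap is in the construction half, specifically in the staircase plus positive-codimension fallback.

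The assertion that ``the bad locus inside the $d_0(nd_1-d_2)$-dimensional solution space has positive codimension'' is sensitive to which $A$ you pick and is simply false for naive staircases. Take $n=2$, $(d_1,d_2)=(2,3)$ and the staircase
\[
A_1=\begin{pmatrix}1&0\\0&1\\0&0\end{pmatrix},\qquad
A_2=\begin{pmatrix}0&0\\0&0\\1&0\end{pmatrix},
\]
which has $\rank\bar A=2$ and $\rank[A_1\mid A_2]=3$ as required. The relation $A_1C_1+A_2C_2=0$ forces $C_1=0$ and $(C_2)_{1,:}=0$, so \emph{every} compatible $C$ has $\rank[C_1\mid C_2]\le 1$; the bad locus is the whole space. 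You will say this particular $(d_1,d_2)$ is covered by the direct-sum branch, and that is true, but the example shows that your codimension claim needs a carefully specified $A$ and an actual argument, not just the inequality $d_1\le nd_0$. In the regime $d_2<d_1$ where you invoke it, you would still need to verify that the free rows of the $C_i$ (those not pinned down by the relation) suffice to make $[C_1\mid\cdots\mid C_n]$ surjective, and this depends on exactly how the 1's in your staircase are distributed among rows and columns.

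The paper avoids this issue entirely: instead of a general codimension count, it writes down explicit block matrices achieving $h_1=\max(d_1-nd_2,0)$ in three separate regimes ($d_1\ge nd_2$; $d_2\le d_1\le nd_2$; $d_1\le d_2$), verifying by hand in each case that both the relation $\sum_iA_iC_i=0$ and the full-rank condition on $[C_1\mid\cdots\mid C_n]$ hold. Your direct-sum idea is essentially what the paper does in the third regime; for the other two it builds representations that are genuinely non-block-diagonal, and checking those is where the work is.
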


\begin{proof}
    Clearly 

    \[h_1(M)=\dim \ker \begin{pmatrix}
        A_1\\
        \vdots\\
        A_n
    \end{pmatrix}=d-\rank \begin{pmatrix}
        A_1\\
        \vdots\\
        A_n
    \end{pmatrix}\geq \max (d_1-nd_2,0)\]

To show the other inequality, it is enough to construct a representation with given $h_1$. In the case $d_1-n\cdot d_2\geq 0$ consider the representation

    \[M=\begin{pmatrix}
        0 & A & 0\\
        0 & 0 & C\\
        0 & 0 & 0
    \end{pmatrix}\]

    with

   \[A=\begin{pmatrix}
        0_n& \dots & 0_n & 0_d & I & & 0 \\
        \vdots& \ddots & \vdots & \vdots &   & \ddots &  \\
        0_n& \dots & 0_n & 0_d & 0 & & I \\
    \end{pmatrix} \qquad \text{and} \qquad C=\begin{pmatrix}
        P & & & & &  & & 0\\
          & \ddots & & & & & & \vdots \\
          & & P & & & & &  \vdots\\
          & & & Q & & & & \vdots\\
          & & & & P & & & \vdots\\
          & & &  & & \ddots & & \vdots\\
          & & & &  & & P & 0 \\
    \end{pmatrix}\]

    where $I=(\lambda_1,\dots,\lambda_n)$ is an indecomposable $n$-tuple with dimension $1\times n$ and $0_n$ are zero matrice of dimension $1\times i$, where $d\equiv d_1 \pmod{n}$ and

    \[P=\begin{pmatrix}
        \lambda_1\\
        \vdots\\
        -(n-1)\lambda_n
    \end{pmatrix} \qquad \text{or} \qquad P=\begin{pmatrix}
        -\lambda_1\\
        \vdots\\
        -(n-3)\lambda_n
    \end{pmatrix}\]

    and $Q$ is an indecomposable $n$-tuple of matrices with dimension vector $(1,d)$. Here we choose $P$ such that the last row is nonzero.
    One can check that $M$ is the desired matrix. Similarly in case $d_2\leq d_1\leq n\cdot d_2$ we can use similar representation $M$ with

        \[A=\begin{pmatrix}
        I \\
        & \ddots \\
        & & I \\
        & & & I' \\
        & & & & \ddots\\
        & & & & & I'
    \end{pmatrix} \qquad \text{and} \qquad  C=\begin{pmatrix}
        P \\
        & \ddots \\
        & & P \\
        & & & P' \\
        & & & & \ddots\\
        & & & & & P'
    \end{pmatrix}\]

    where $I,I'$ are indecomposable tuples with dimensions $1\times l$ and $1\times (l+1)$, where $m$ and the number of tuples $I,I'$ is chosen such that dimension of the tuple $A$ is $d_1\times d_2$
    
    \[I=\begin{pmatrix}
        \lambda_1 & \dots & \lambda_l
    \end{pmatrix} \qquad \text{and} \qquad I'=\begin{pmatrix}
        \lambda_1 & \dots & \lambda_{l-1}
    \end{pmatrix}.\]

    and $P,P'$ are the matrices
    
       \[P=\begin{pmatrix}
        \lambda_1\\
        \vdots\\
        -(l-1)\lambda_l
    \end{pmatrix} \qquad \text{and} \qquad P'=\begin{pmatrix}
        \lambda_1\\
        \vdots\\
        -(l)\lambda_{l+1}
    \end{pmatrix}.\]    

    If $l-1=0$ or $l=0$ we use the same trick as before to avoid a column being $0$.
    Finally if $d_1\leq d_2$ we can let

     \[A=\begin{pmatrix}
        I \\
        & \ddots \\
        & & I \\
        & & & I' \\
        & & & & \ddots\\
        & & & & & I'
    \end{pmatrix} \qquad \text{and} \qquad  C=\begin{pmatrix}
        P \\
        & \ddots \\
        & & P \\
        & & & P' \\
        & & & & \ddots\\
        & & & & & P'
    \end{pmatrix}\]

    where $I,I'$ and $P,P'$ are the indecomposable $n$-tuples with dimension vector $(l^2-1)\times l$, $((l+1)^2-1)\times (l+1)$ and $l\times 1$, $(l+1)\times 1$ respectively as in lemma~\ref{lema_dim_gt1} for appropriate $l$.
\end{proof}

With this, we can compute the generic socle layering for any radical layering. Let $\underline{d}=(d_0,d_1,d_2)$ and let $\underline{d}'=\socdim(\underline{d})=(d_0',d_1',d_2')$.
To compute generic socle layering, it is enough to compute $d_0'$ and $d_1'$.

We may also assume $h_1(\rep(\underline{d}, \mathcal{A}))\neq 0$ since we have determined all irreducible components with $h_1=0$ (namely all the dimension vectors with $h_0=0$ and the exceptional dimension vector with $h_0=1$).

\begin{lemma}
    With notation as above

    \[d_0'=d_2+(d_1-n\cdot d_2)\]
\end{lemma}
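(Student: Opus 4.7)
The plan is to read off $\dim\soc M$ directly from the block form~\eqref{form} and then pass to generic values via upper semicontinuity.

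First I would partition a vector $v=(v^{(2)},v^{(1)},v^{(0)})\in k^{d_2}\oplus k^{d_1}\oplus k^{d_0}$ and multiply out $M_i v$ in blocks. This shows $v\in\soc M=\bigcap_i\ker M_i$ iff $v^{(2)}$ is arbitrary, $v^{(0)}\in K_0:=\bigcap_i\ker C_i$, and $A_i v^{(1)}+B_i v^{(0)}=0$ for every $i$. Consequently
\[
\dim\soc M \;=\; d_2 \;+\; \dim\{(v^{(1)},v^{(0)})\in k^{d_1}\oplus K_0 \,:\, A_i v^{(1)}+B_i v^{(0)}=0\ \forall i\}.
\]

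Next I would exploit that, by the preceding treatment of the $h_0\neq 0$ case (which produced only the exceptional component, where $h_1=0$), the present lemma is really being applied in the subcase $h_0(\rep(\underline{d},\mathcal{A}))=0$. Then for generic $M$ one has $K_0=0$, so the condition $v^{(0)}\in K_0$ forces $v^{(0)}=0$, and the remaining equation collapses to $v^{(1)}\in\bigcap_i\ker A_i$. By the previous lemma this intersection has generic dimension $h_1=d_1-nd_2$, so the generic socle splits as $k^{d_2}\oplus\bigcap_i\ker A_i\oplus 0$, of dimension $d_2+(d_1-nd_2)$.

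For rigor I would invoke upper semicontinuity of $\dim\soc$: its generic value equals the minimum over $\rep(\underline{d},\mathcal{A})$. The pointwise inequality $\dim\soc M\geq d_2+h_1(M)\geq d_2+(d_1-nd_2)$ follows by specialising $v^{(0)}=0$ above and using upper semicontinuity of $h_1$. Equality is attained on the open subset where both $h_0(M)$ and $h_1(M)$ realise their generic minima, which is nonempty by the explicit constructions in the earlier lemmas. This yields $d_0'=d_2+(d_1-nd_2)$.

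The main obstacle is not the linear algebra (which is transparent once the block form is unpacked) but the careful bookkeeping of the implicit $h_0=0$ reduction: one must track that the $h_0\neq 0$ subcase has already been closed off via the exceptional component, so that only the $h_0=0$, $h_1>0$ regime remains to be treated here, and in that regime the image of $(A_i)_i$ already fills $(k^{d_2})^n$, forcing $B$ to play no role in the generic socle.
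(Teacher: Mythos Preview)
Your proof is correct and follows essentially the same route as the paper's: both compute $\dim\soc M$ from the block form, obtain the lower bound $d_2+h_1(M)$ by setting $v^{(0)}=0$, and achieve the upper bound via a representation with $\bigcap_i\ker C_i=0$. Your write-up is in fact more careful than the paper's in making the implicit $h_0=0$ hypothesis explicit and in justifying, via irreducibility, that the open loci $\{h_0=0\}$ and $\{h_1=d_1-nd_2\}$ intersect (rather than appealing to a single explicit witness).
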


\begin{proof}
    We must determine the generic dimension common kernel of matrices in $M$. Since each representation $M$ is isomorphic to one with first $h_1(M)$ columns in $A$ equal to $0$, it must have the dimension of the common kernel at least $d_2+h_1(M)$.

    Conversely by proof of proposition~\ref{value_k0}, there exists some representation 

    \[M=\begin{pmatrix}
        0 & A & B \\
        0 & 0 & C\\
        0 & 0 & 0
    \end{pmatrix}\]

    in $\rep(\underline{d},\mathcal{A})$ such that $C$ has trivial common kernel. Hence if

    \[\begin{pmatrix}
        x\\
        y\\
        z\\
    \end{pmatrix}\in \ker M\]

    then $z\in \ker C=\{0\}$. So the common kernel of $M$ is the sum of the first block and kernel of $A$, so its generic dimension is $d_2+h_1(\rep(\underline{d},\mathcal{A}))$ as needed.
\end{proof}

\begin{lemma}
    With notation as above

    \[d_1'=\max(n\cdot d_2+d_0-(n^2-1)\cdot d_2,n\cdot d_2)\]
\end{lemma}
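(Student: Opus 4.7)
The plan is to compute $d_1' = \dim(\soc^2 M/\soc M)$ at a generic $M\in\rep(\underline{d},\mathcal{A})$. Using the block form~(\ref{form}), a direct computation gives $M_iM_j(v_2,v_1,v_0)^T = (A_iC_jv_0,0,0)^T$, so a vector $(v_2,v_1,v_0)^T$ lies in $\soc^2 M$ iff $A_iC_jv_0 = 0$ for all $i,j$. Hence $\soc^2 M = V_2 \oplus V_1 \oplus W$ where
\[W = \{v_0 \in V_0 : A_iC_jv_0 = 0 \text{ for all } i,j\},\]
and $\dim\soc^2 M = d_2 + d_1 + \dim W$. Combining this with the previous lemma's value $d_0' = d_2 + h_1$ and the standing assumption $h_1 = d_1 - nd_2 > 0$, we obtain $d_1' = nd_2 + \dim W$. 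The task therefore reduces to showing that the generic value of $\dim W$ equals $\max(d_0 - (n^2-1)d_2,\, 0)$.

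For the lower bound, consider the linear map $\Psi:V_0 \to V_2^{n^2}$, $v_0\mapsto (A_iC_jv_0)_{i,j}$, whose kernel is exactly $W$. Using the reduction to $S = \sum x_i^2$ made earlier in the section, the defining relation becomes $\sum_i A_iC_i = 0$, which forces $\sum_i(\Psi(v_0))_{i,i} = 0$ identically in $v_0$. Hence $\im\Psi$ lies in a codimension-$d_2$ subspace of $V_2^{n^2}$, giving $\rank\Psi \leq (n^2-1)d_2$ and thus $\dim W \geq \max(d_0 - (n^2-1)d_2,\, 0)$.

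For the matching upper bound, the variety $\rep(\underline{d},\mathcal{A})$ is irreducible by corollary~\ref{cover}, and $\dim W = d_0 - \rank\Psi$ is upper semicontinuous in the entries of $(A,C)$. Thus the generic value of $\dim W$ equals its minimum, and it suffices to exhibit one $M \in \rep(\underline{d},\mathcal{A})$ with $\rank\Psi = \min(d_0,\, (n^2-1)d_2)$. I would take $(A_1,\dots,A_n)$ as a direct sum of indecomposable tuples from lemma~\ref{lema_dim_gt1}, chosen so that $U := \bigcap\ker A_i$ has dimension $h_1$ and $\sum\im A_i = V_2$. The relation $\sum A_iC_i = 0$ then identifies the space of admissible induced maps $(\bar C_1,\dots,\bar C_n) \in \Hom(V_0,(V_1/U)^n)$ with $\Hom(V_0,K)$, where $K$ is the $(n^2-1)d_2$-dimensional kernel of the sum map $(V_1/U)^n \to V_2$ induced by the $\bar A_i$; a generic element of $\Hom(V_0,K)$ has rank $\min(d_0,\, (n^2-1)d_2)$. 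The main obstacle is checking that such a choice, together with suitable $U$-components of $C_j$ and matrices $B_i$, genuinely lies in $\rep(\underline{d},\mathcal{A})$---that is, that the open condition $\sum\im C_j = V_1$ from lemma~\ref{lema_odp} can be arranged simultaneously; since this condition cuts out an open subset of the irreducible variety of $(A,C)$-tuples satisfying the relation, its intersection with the maximum-rank locus for $\Psi$ is nonempty, yielding the claimed formula $d_1' = \max(nd_2 + d_0 - (n^2-1)d_2,\, nd_2)$.
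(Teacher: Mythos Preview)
Your computation of $\soc^2 M$ via the map $\Psi(v_0)=(A_iC_jv_0)_{i,j}$ and the resulting identity $d_1'=nd_2+\dim W$ are correct, and your lower bound $\dim W\ge\max(d_0-(n^2-1)d_2,0)$ from the single relation $\sum_i A_iC_i=0$ is exactly the right estimate. The overall strategy matches the paper's.

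The difference is in how the upper bound is obtained, and here your argument has a gap. The paper does not construct $A$ and then search for $C$; instead it starts from a \emph{generic} $M\in\rep(\underline{d},\mathcal{A})$ (which exists because the variety is nonempty and irreducible and $\socdim$ is upper semicontinuous), and observes that for such $M$ the tuple $A$ is \emph{forced}, up to change of basis, to be a direct sum of $d_2$ copies of the $1\times n$ indecomposable $(\lambda_1,\dots,\lambda_n)$ padded by $h_1$ zero columns. This is because the stacked matrix $(A_1;\dots;A_n)$ restricted to a complement of $U=\bigcap\ker A_i$ is an invertible $nd_2\times nd_2$ matrix, and after a basis change it becomes the identity. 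With this specific $A$ in hand, the paper shows $W=\bigcap_j\ker C_j'$ (where $C_j'$ is $C_j$ with its first $h_1$ rows deleted), and the columns of the stacked $C'$ live in the $(n^2-1)d_2$-dimensional kernel of $(A_1',\dots,A_n')$; since $M$ was generic to begin with, no separate existence check is needed.

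Your route instead fixes $A$ first---and the reference to lemma~\ref{lema_dim_gt1} is misplaced, since those are $(mn-1)\times m$ tuples with trivial common kernel, not the $d_2\times d_1$ tuples with $h_1$-dimensional kernel you need---and then asserts that the open condition $\sum_j\im C_j=V_1$ and the maximum-rank condition on $\Psi$ meet. But for two nonempty open subsets of an irreducible variety to intersect you must know each is nonempty, and you have not verified that your chosen $A$ admits \emph{any} $C$ with $\sum_j\im C_j=V_1$; nonemptiness of $\rep(\underline{d},\mathcal{A})$ only guarantees this for \emph{some} $A$, not necessarily yours. The paper's device of starting from a generic point of $\rep(\underline{d},\mathcal{A})$ and reading off $A$ sidesteps this issue entirely.
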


\begin{proof}
    Since $\socdim$ is upper semicontinuous and $\rep(\underline{d},\mathcal{A})$ irreducible, there exists some representation $M\in\rep(\underline{d},\mathcal{A})$ with minimal value $\socdim M$. In particular $\socdim_1 M=d_2+(d_1-n\cdot d_2)$. Up to isomorphism, we may assume $M$ has the standard block structure, with 

    \[A=\begin{pmatrix}
        0_k & I & & 0\\
        \vdots & & \ddots & \\
        0_K & 0 & & I
    \end{pmatrix}\]

    where $I=(\lambda_1,\dots,\lambda_n)$, since this is the only tuple with such a dimension vector and minimal kernel.

    Now $\soc^2(M)$ contains exactly all vectors that map into the space $k^{d_0'}\oplus 0$. Hence it is the direct sum of $\rad M$ and the common kernel of the maps

    \[C_i': M/\rad M\xrightarrow{C_i}\rad M/\rad^2 M\rightarrow \rad M/\soc M\]

    Where the matrices $C_i'$ are exactly the matrices we get if we erase the first $d_1-nd_2$ rows from matrices $C_i$.
    This means it is enough to determine the common kernel of matrices $C_i'$.

    First notice that for a given $A$ the relation

    \[S=\sum A_iC_i=0\]

    is equivalent to the relation

    \[S'=\sum A_i'C_i'=0\]

    where $A_i'$ is the matrix we get from $A_i$ if we erase the first $d_1-nd_2$ columns. That means that independently of the first $d_1-nd_2$ columns of $C_i$, the set of tuples $(C_i')_i$ for which $M$ is a representation of $\mathcal{A}$ is exactly the set of tuples $(C_i')_i$ that satisfy relation $S'$.

    The minimal dimension of the common kernel is thus

    \[d_0-\dim V\]

    where $V$ is the dimension of the space of solutions of the equation 
    
    \[\sum A_i'v_i=0\]

    By lemma~\ref{relation}

    \[\dim V=\max(0,(n^2-1)\cdot d_2)\]

    since $\rep(\underline{d},\mathcal{A})$ is nonempty, $\dim V=0$ is not possible, so we have

    \[\dim \left(\bigcap \ker C_i'\right)=\max (0,d_0-(n^2-1)\cdot d_2)\]

    The result follows.
\end{proof}

This gives the generic socle layering for $\rep(\underline{d},\mathcal{A})$;

\begin{cor}\label{generic_socdim}
    Let $\underline{d}=(d_0,d_1,d_2)$ be a dimension vector of radical layering, such that $\rep(\underline{d},\mathcal{A})$ is nonempty. Further suppose $h_1(\rep(\underline{d},\mathcal{A}))=h=d_1-nd_2>0$. Then the generic value of the dimension of the socle layering is

    \[\socdim \underline{d}=\begin{cases}
        (d_2+h,d_1-h,d_0) & (n^2-1)\cdot d_2\geq d_0,\\
        (d_2+h,d_1-h+d_0-(n^2-1)\cdot d_2, (n^2-1)d_2) & \text{oterwise.}
    \end{cases}\]
\end{cor}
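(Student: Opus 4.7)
The plan is to assemble Corollary~\ref{generic_socdim} directly from the two preceding lemmas, which have already computed $d_0'$ and $d_1'$ respectively, together with the conservation of dimension $d_0'+d_1'+d_2' = d_0+d_1+d_2$. So the ``proof'' is really a bookkeeping step that unpacks the $\max$ in the formula for $d_1'$ into the two cases stated.

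First, I would set $h = d_1 - n\cdot d_2$, which is the generic value of $h_1$ on $\rep(\underline{d},\mathcal{A})$ by the previous lemma under the standing assumption $h > 0$. By the first of the two preceding lemmas, the generic value of the socle dimension in degree $0$ is $d_0' = d_2 + h$. By the second, $d_1' = \max\bigl(nd_2 + d_0 - (n^2-1)d_2,\; nd_2\bigr)$. I would then split on which term achieves the maximum.

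In the first case, $(n^2-1)d_2 \geq d_0$, so the second term wins and $d_1' = nd_2 = d_1 - h$. Computing $d_2'$ from $d_2' = d - d_0' - d_1' = (d_0+d_1+d_2) - (d_2+h) - (d_1-h) = d_0$ gives the first line of the corollary. In the second case, $(n^2-1)d_2 < d_0$, the first term wins, so $d_1' = d_1 - h + d_0 - (n^2-1)d_2$, and the analogous arithmetic yields $d_2' = (n^2-1)d_2$, matching the second line.

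There is essentially no obstacle here beyond verifying that the generic value of the triple $(d_0',d_1',d_2')$ is consistent, i.e.\ that $d_0'$ and $d_1'$ are attained \emph{simultaneously} by a single generic representation, rather than separately on two different open subsets. This follows because both maps $\socdim_0$ and $\socdim_1$ are upper semicontinuous on the irreducible variety $\rep(\underline{d},\mathcal{A})$ (by Corollary~\ref{cover}), so each is generically constant, and the intersection of the two open subsets on which they attain their minima is again open and nonempty. Hence a generic point realises both minima at once, the triple $(d_0', d_1', d_2')$ is the generic value of $\socdim$, and summing to $d$ forces the stated value of $d_2'$.
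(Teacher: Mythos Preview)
Your proposal is correct and matches the paper's approach: the paper states the corollary immediately after the two lemmas with only the sentence ``This gives the generic socle layering for $\rep(\underline{d},\mathcal{A})$,'' treating it as a direct bookkeeping consequence exactly as you do. Your explicit remark about simultaneous attainment of the two minima is a small but welcome addition; the paper handles this implicitly inside the proof of the second lemma by starting from a representation with minimal \emph{full} $\socdim$.
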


Dually we can compute the generic radical layering for $\rep^*(\underline{d},\mathcal{A})$

\begin{cor}
    Let $\underline{d}=(d_0,d_1,d_2)$ be a dimension vector of radical layering, such that $\rep^*(\underline{d},\mathcal{A})$ is nonempty. Further suppose $h'_1(\rep^*(\underline{d},\mathcal{A}))=h=d_1-nd_2>0$. Then the generic value of the dimension of radical layering is

    \[\raddim \underline{d}=\begin{cases}
        (d_2+h,d_1-h,d_0) & (n^2-1)\cdot d_2\geq d_0,\\
        (d_2+h,d_1-h+d_0-(n^2-1)\cdot d_2, (n^2-1)d_2) & \text{oterwise.}
    \end{cases}\]
\end{cor}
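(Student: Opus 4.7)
The plan is to deduce this corollary from the previous one by invoking the duality between radicals and socles. First I would observe that the opposite algebra $\mathcal{A}^{op}$ is of the same form as $\mathcal{A}$: if in $\mathcal{A}$ the degree-two relation is $S=\sum_{i,j}a_{ij}x_ix_j$, then in $\mathcal{A}^{op}$ the corresponding relation is $\sum_{i,j}a_{ji}x_ix_j$, whose coefficient matrix $(a_{ji})$ is the transpose of $(a_{ij})$ and hence still invertible. So every result already proved for $\mathcal{A}$—in particular Theorem~\ref{rep_layer_exists_iff}, Corollary~\ref{cover}, the computation of $h_1(\rep(\underline{d},\mathcal{A}))$, and Corollary~\ref{generic_socdim}—applies verbatim to $\mathcal{A}^{op}$.

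Next I would set up the $k$-linear duality $D=\Hom_k(-,k)\colon \mathcal{A}\text{-}\mathrm{mod}\to\mathcal{A}^{op}\text{-}\mathrm{mod}$, which is an exact contravariant equivalence that interchanges radical and socle filtrations: $D(\soc^i M)=DM/\rad^i(DM)$ and $D(\rad^i M)=DM/\soc^{m-i}(DM)$ up to the usual reindexing. On the level of varieties, this yields a $\GL_d$-equivariant isomorphism $\rep^*_{\underline{d}}(\mathcal{A})\cong \rep_{\underline{d}}(\mathcal{A}^{op})$ that identifies the semicontinuous map $\raddim$ on the source with $\socdim$ on the target. A direct check shows that the block-upper-triangular form~\eqref{form} on $\rep^*(\underline{d},\mathcal{A})$ is sent under $D$ to the analogous form on $\rep(\underline{d},\mathcal{A}^{op})$, with the roles of $A$ and $C$ (transpose) swapped; consequently $h'_1$ on $\rep^*(\underline{d},\mathcal{A})$ corresponds to $h_1$ on $\rep(\underline{d},\mathcal{A}^{op})$, and both equal $d_1-nd_2$.

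With the dictionary in place, applying Corollary~\ref{generic_socdim} to $\mathcal{A}^{op}$ in place of $\mathcal{A}$ gives the generic value of $\socdim$ on $\rep_{\underline{d}}(\mathcal{A}^{op})$, and transporting back via $D$ gives exactly the claimed formula for $\raddim(\underline{d})$ on $\rep^*(\underline{d},\mathcal{A})$. The only step requiring any care is verifying that the duality really does send the stratification indexed by socle layering to the one indexed by radical layering in a way that preserves the numerical invariants $n$, $d_0$, $d_1$, $d_2$ in the correct positions; I expect this bookkeeping—rather than any new substantive inequality or construction—to be the main, and essentially only, obstacle.
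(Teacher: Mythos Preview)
Your proposal is correct and matches the paper's approach: the paper gives no explicit proof for this corollary, simply prefacing it with ``Dually we can compute the generic radical layering for $\rep^*(\underline{d},\mathcal{A})$'', and your argument via $D=\Hom_k(-,k)$ and $\mathcal{A}^{op}$ is precisely the standard way to make that word ``dually'' rigorous. The observation that $\mathcal{A}^{op}$ is again of the required form (transposed, hence still invertible, coefficient matrix) and that $D$ swaps $h'_1$ with $h_1$ and $\raddim$ with $\socdim$ is exactly what is needed, so nothing is missing.
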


We can use this and lemma~\ref{lemma_min_soc_rad} to determine the possible irreducible components 

\begin{lemma}
    Suppose $\underline{d}$ is a dimension vector, such that $\rep(\underline{d},\mathcal{A})$ is nonempty and 

    \[\raddim(\socdim(\underline{d}))=\underline{d}\]

    then $\socdim(\underline{d})=(d_2,d_1,d_0)$ or $\underline{d}$ is one of the exceptional dimension vectors

    \[\layer\in \Big\{(a,n(a-1),(n^2-1)(a-1)),((n^2-1)(a-1),n(a-1)+1,a-1) \Big\}\]
\end{lemma}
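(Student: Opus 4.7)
The plan is to explicitly compute $\underline{d}':=\socdim(\underline{d})$ and $\underline{d}'':=\raddim(\underline{d}')$ via Corollary~\ref{generic_socdim} and its dual, then impose the hypothesis $\underline{d}''=\underline{d}$ as a constraint on the possible forms of $\underline{d}$. The analysis splits along the generic values of the semicontinuous invariants $h_0, h_1$ on $\rep(\underline{d},\mathcal{A})$. In the baseline case $h_0=h_1=0$, the radical and socle filtrations already coincide at a generic representation (as noted just before Proposition~\ref{value_k0}), so Lemma~\ref{socdim_raddim_complement} immediately gives $\socdim(\underline{d})=(d_2,d_1,d_0)$—the first alternative in the statement.

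In the case $h_0>0$, I reproduce the derivation from the paragraph following Proposition~\ref{value_k0}. The hypothesis $\underline{d}''=\underline{d}$ forces $\rep((d_0-1,d_1+1,d_2),\mathcal{A})$ to be empty, for otherwise $\overline{\rep_{\underline{d}}(\mathcal{A})}$ sits in the strictly larger irreducible closure $\overline{\rep_{(d_0-1,d_1+1,d_2)}(\mathcal{A})}$, whose generic socle layering—after applying $\raddim$—returns a dimension vector strictly smaller than $\underline{d}$. Combined with Proposition~\ref{value_k0} (which gives $h_0\geq 1$) and Theorem~\ref{rep_layer_exists_iff} applied to both $\underline{d}$ and $(d_0-1,d_1+1,d_2)$, this pins down $d_1=n(d_0-1)$ and $d_2=(n^2-1)(d_0-1)$, producing the first exceptional vector with $a:=d_0$.

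The remaining case is $h_1>0$, $h_0=0$. I apply Corollary~\ref{generic_socdim} to write $\underline{d}'$ in one of its two sub-forms (depending on the sign of $(n^2-1)d_2-d_0$), then the dual corollary to compute $\underline{d}''$ in one of its two sub-forms, and impose $\underline{d}''=\underline{d}$ in each of the four resulting combinations. A crucial observation is that at least one of the dual invariants $h'_0, h'_1$ of $\underline{d}'$ must be positive: otherwise the dual baseline would give $\underline{d}''=(d_0, nd_2, d_1-(n-1)d_2)$, which equals $\underline{d}$ only when $d_1=nd_2$, contradicting $h_1>0$. Three of the four sub-cases collapse to contradictions with Theorem~\ref{rep_layer_exists_iff} or with the case conditions; the sole consistent one matches $\underline{d}'$ with the exceptional socle layering $(a, n(a-1), (n^2-1)(a-1))$ and forces $\underline{d}=((n^2-1)(a-1), n(a-1)+1, a-1)$. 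The main obstacle is this four-way sub-case bookkeeping—in particular, dispatching the $h'_0>0$ branch requires invoking the dual of the $h_0>0$ derivation from the previous paragraph, applied to $\rep^*(\underline{d}',\mathcal{A})$. Most of the work is careful algebra and tracking which inequalities are tight, rather than any deep new insight.
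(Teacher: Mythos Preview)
Your approach is essentially the same as the paper's: both compute $\underline{d}'=\socdim(\underline{d})$ and $\underline{d}''=\raddim(\underline{d}')$ via Corollary~\ref{generic_socdim} and its dual, then case-split along the generic values of $h_0,h_1$ (and dually $h'_0,h'_1$) and impose $\underline{d}''=\underline{d}$. The paper's proof proper is phrased as a contradiction and jumps straight to the $h_0=0,\ h_1>0$ sub-case, treating the $h_0=h_1=0$ and $h_0>0$ cases as already handled by the text preceding the lemma; you make the full case tree explicit, which is arguably cleaner.

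One small imprecision worth tightening in your $h_0>0$ branch: the stated reason that $\rep((d_0-1,d_1+1,d_2),\mathcal{A})$ must be empty---that the larger variety's generic socle layering, followed by $\raddim$, ``returns a dimension vector strictly smaller than $\underline{d}$''---does not by itself contradict $\raddim(\socdim(\underline{d}))=\underline{d}$, since those are generic values on \emph{different} irreducible sets. The clean argument (implicit in Lemma~\ref{lemma_min_soc_rad}) is that $\underline{d}''=\underline{d}$ gives $\overline{\rep_{\underline{d}}(\mathcal{A})}=\overline{\rep^*_{\underline{e}}(\mathcal{A})}$ with $\underline{e}=\socdim(\underline{d})$; a strict containment $\overline{\rep_{\underline{d}}(\mathcal{A})}\subsetneq\overline{\rep_{(d_0-1,d_1+1,d_2)}(\mathcal{A})}$ then forces, by alternating the two semicontinuous maps, a strictly increasing chain of such closures, contradicting the finiteness of dimension vectors summing to $d$. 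The paper has the same looseness here (it simply asserts ``then $h_0=0$'' at the outset of its proof), so this is not a substantive divergence from the paper's argument.
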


\begin{proof}
    Suppose this is not true. Then

    \[h_0(\rep(\underline{d},\mathcal{A}))=0 \qquad \text{and} \qquad h_1(\rep(\underline{d},\mathcal{A}))>0\]

    Hence 

    \[\underline{d'}=\socdim \underline{d}=\begin{cases}
        (d_2+h,d_1-h,d_0) & (n^2-1)\cdot d_2\geq d_0,\\
        (d_2+h,d_1-h+d_0-(n^2-1)\cdot d_2, (n^2-1)d_2) & \text{otherwise.}
    \end{cases}\]

    First suppose $\underline{d}'=(d_2+h,d_1-h,d_0)$. Using the same argument as for radical layering, we can show $h_0'=0$, or $\underline{d}'$ is an exceptional socle layering. In the first case

    \[\raddim(\underline{d}')=\begin{cases}
        (d_0,d_1-h,d_2+h)\\
        (d_0+(d_1-h-h'),\cdot,\cdot)
    \end{cases}\]

    depending on whether $h'=h_1(\rep^*(\underline{d}',\mathcal{A}))=0$ or not. Neither of these equals $\underline{d}$ as needed. 
    
    In the second case, $\underline{d}$ must be exceptional as well, otherwise $\raddim(\underline{d}')\neq \underline{d}$. 
    
    Hence $\underline{d}'=(d_2+h,d_1-h,d_0)$ is not possible. We are left with the case $\underline{d}'=(d_2+h,d_1-h+d_0-(n^2-1)\cdot d_2, (n^2-1)d_2)$. Again we need $h'_0(\rep^*(\underline{d},\mathcal{A}))=0$ and hence $h'=h'_1(\rep^*(\underline{d},\mathcal{A}))\neq 0$, so the possible values for $\raddim(\underline{d}')$ are

    \[\raddim(\underline{d}')=\begin{cases}
        (\cdot,\cdot, d_2+h)\\
        (\cdot,\cdot, (n^2-1)^2d_2)
    \end{cases}\]

    neither of which can be $\underline{d}$ again.
\end{proof}

The dimension vectors that satisfy the conclusion of the previous lemma are all minimal in the image $\vartheta(\rep_d(\mathcal{A}))$, we have shown this for exceptional vectors earlier and for the others it follows from lemma~\ref{socdim_raddim_complement}. Hence these are all irreducible components. With this, we have proven theorem~\ref{thm_components}. Theorem~\ref{thm_gen_rad} is then the same as lemma~\ref{generic_socdim} applied for the irreducible components we found.

\subsection*{Acknowledgement}

The results of this article were produced as part of my master's thesis at the University of Ljubljana.
I thank my master's thesis mentor K. Šivic for his mentorship.
This work was partially supported by the grant G0B3123N from FWO.

\bibliography{vir}

\Addresses

\end{document}